\documentclass[11pt]{articulofederico}
\usepackage{graphicx}
\usepackage{dcolumn}
\usepackage{enumerate}
\usepackage{subfigure}
\usepackage{epsfig}
\usepackage{amssymb}
\usepackage{epsf} 	
\usepackage{amscd}

\usepackage[utf8]{inputenc}
\usepackage[spanish, activeacute]{babel}

\usepackage{xcolor} 

\usepackage{amsfonts} 

\textheight 8in
\textwidth 6in

\oddsidemargin .25in
\evensidemargin .25in
\setlength{\parskip}{5pt plus 2pt minus 1pt}

\usepackage{amssymb}
\usepackage{enumerate}
\usepackage{graphicx}

\usepackage{amsmath,amssymb,enumerate,amsthm,flafter}

\newtheorem{teor}{Teorema}[section]
\newtheorem{prop}[teor]{Proposici\'on}
\newtheorem{cor}[teor]{Corolario}
\newtheorem{lem}[teor]{Lema}


\theoremstyle{definition}
 
  \newtheorem{defi}[teor]{Definici\'on}
    
        \newtheorem{Observaci'on}[teor]{Observaci\'on}

 \newtheorem{ej}[teor]{Ejemplo}

\DeclareMathOperator{\reg}{r}
\DeclareMathOperator{\ac}{b}

\setlength{\parindent}{0cm}

\newcommand{\F}{{\ensuremath{\mathbb{F}}}}
\newcommand{\R}{{\ensuremath{\mathbb{R}}}}
\newcommand{\Z}{{\ensuremath{\mathbb{Z}}}}
\newcommand{\N}{{\ensuremath{\mathbb{N}}}}
\newcommand{\Cx}{{\ensuremath{\mathbb{C}}}}
\newcommand{\A}{{\ensuremath{\mathcal{A}}}}
\newcommand{\Shi}{{\ensuremath{\mathcal{S}}}}
\newcommand{\B}{{\ensuremath{\mathcal{B}}}}
\newcommand{\C}{{\ensuremath{\mathcal{C}}}}
\newcommand{\Lin}{{\ensuremath{\mathcal{L}}}}
\newcommand{\Pk}{{\mathcal{P}}}
\newcommand{\rank}{{\mathrm{rango}}}

\begin{document}
\title{\textsf{Tres lecciones en combinatoria algebraica.  \\
\normalsize{III. Arreglos de hiperplanos.}}}

 \author{\textsf{Federico Ardila\footnote{\textsf{San Francisco State University, San Francisco, CA, USA y Universidad de Los Andes, Bogot\'a, Colombia, federico@sfsu.edu -- financiado por la CAREER Award DMS-0956178 y la beca DMS-0801075 de la National Science Foundation de los Estados Unidos, y por la SFSU-Colombia Combinatorics Initiative.}}}\qquad
\textsf{Emerson Le\'on\footnote{\textsf{Freie Universit\"at Berlin, Alemania,  emerson@zedat.fu-berlin.de -financiado por el Berlin Mathematical School.}}}\\
\textsf{Mercedes Rosas\footnote{\textsf{Universidad de Sevilla, Espa\~na, mrosas@us.es -- financiada por los proyectos MTM2007--64509 del Ministerio de Ciencias e Innovaci\'on de Espa\~na y FQM333 de la Junta de Andalucia.}}}\qquad
\textsf{Mark Skandera\footnote{\textsf{Lehigh University, 
Bethlehem, PA, USA, 
mas906@math.lehigh.edu -- financiado por la beca H98230-11-1-0192 de la National Security Agency de los Estados Unidos.}}}
}
\date{}
\maketitle

\begin{abstract} 
En esta serie de tres art\'\i culos, damos una exposici\'on de varios resultados y problemas abiertos en tres \'areas de la combinatoria algebraica y geom\'etrica:  las matrices totalmente no negativas,
las representaciones
del grupo sim\'etrico $S_n$, y los arreglos de hiperplanos. Esta tercera parte presenta una introducci\'on a los arreglos de hiperplanos desde un punto de vista combinatorio.
\end{abstract}



En marzo de 2003 se llev\'o a cabo el Primer Encuentro Colombiano de Combinatoria en Bogot\'a, Colombia. Como parte del encuentro, se organizaron tres minicursos, dictados por Federico Ardila, Mercedes Rosas, y Mark Skandera. Esta serie resume el material presentado en estos cursos en tres art\'{\i}culos:   \emph{I. Matrices
totalmente no negativas y funciones sim\'etricas} \cite{ALRS1}, 
\emph{II. Las funciones sim\'etricas y la teor\'{\i}a de las representaciones} \cite{ALRS2}, y \emph{III. Arreglos de hiperplanos.} \cite{ALRS3} 

%

En esta tercera parte presenta una introducci'on a los arreglos de hiperplanos desde un punto de vista combinatorio. Estudiaremos las regiones, el polinomio caracter'istico, y el poset de intersecciones de un arreglo. Tambi\'en estudiaremos algunos arreglos especiales que est\'an relacionados con objetos combinatorios cl\'asicos.

\section{\textsf{Arreglos de rectas y arreglos de planos.}}

La siguiente pregunta servir\'a como motivaci\'on:
Si trazamos $n$ l\'{\i}neas rectas en el plano $\R^2$, ¿cu\'al es el mayor n\'umero de regiones que podemos formar?

Es f\'acil ver que para lograr que el n\'umero de regiones sea m\'aximo, las rectas deben estar en \emph{posici\'on general}; es decir:



\begin{itemize}
\item No hay dos rectas paralelas.
\item No hay tres rectas concurrentes.
\end{itemize}

En efecto, si algunas rectas del arreglo no cumplen estas propiedades, podemos moverlas un poco de manera que el n\'umero de regiones aumente. Lo sorprendente es que las dos condiciones anteriores son suficientes, y determinan de manera \'unica el n\'umero de regiones que se forman.

\begin{teor}\label{2D}
Cualquier arreglo de  $n$ rectas en posici\'on general en $\R^2$ tiene el m'aximo n'umero de regiones $\reg(n)$ y de regiones acotadas $\ac(n)$ entre todos los arreglos de $n$ rectas. Estos valores m'aximos est'an dados por las f'ormulas
\begin{eqnarray*}
\reg(n) &=& {n \choose 2} +  {n \choose 1} + {n \choose 0},\\
\ac(n) &=& {n \choose 2} - {n \choose 1} + {n \choose 0}.
\end{eqnarray*}
\end{teor}

\begin{proof}

Demostremos por inducci\'on que los n\'umeros $\reg(n)$ y $\ac(n)$ de regiones y de regiones acotadas determinadas por $n$ rectas gen\'ericas dependen s\'olo de $n$, y satisfacen las recurrencias
\begin{equation}\label{eq1}\reg(n)=\reg(n-1)+n, \qquad \qquad \ac(n)=\ac(n-1)+(n-2),\end{equation}
El caso $n=0$ es claro. Consideremos $n$ rectas en posici\'on general. Si borramos una de ellas, las otras $n-1$ dividen al plano en $\reg(n-1)$ regiones, $\ac(n-1)$ de las cuales son acotadas. Al volver a introducir la recta borrada, \'esta dividir\'a a algunas de estas regiones en dos. Las $n-1$ rectas dividen a la nueva recta en $n$ sectores, de las cuales $n-2$ son acotados. 
Por lo tanto, la nueva recta divide a $n$ de las $\reg(n-1)$ regiones, y exactamente $n-2$ de \'estas 
introducen una nueva regi\'on acotada. Las f\'ormulas expl\'{\i}citas para $\reg(n)$ y $\ac(n)$ se siguen.
\end{proof}
\begin{figure}[htb]\centering
\includegraphics[width=4cm]{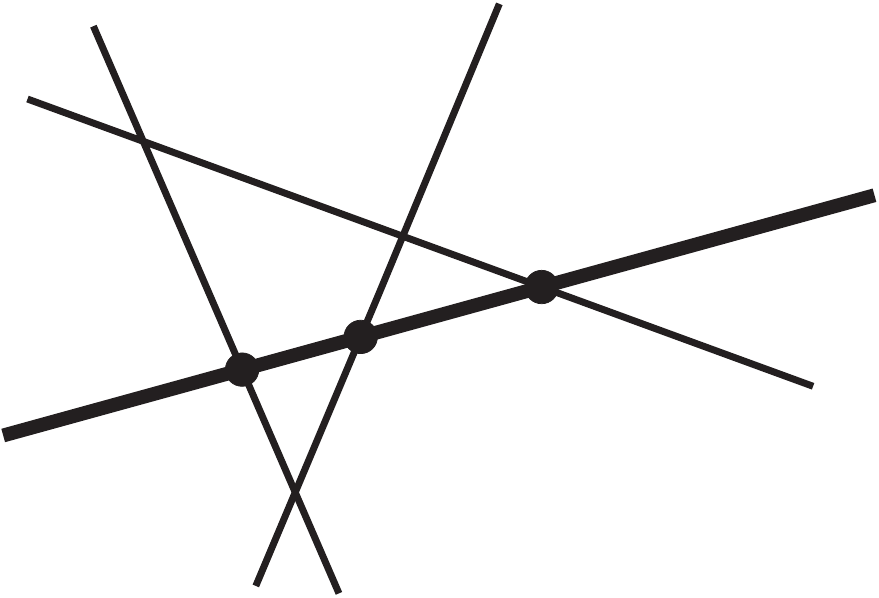}
\caption{El efecto de introducir una nueva recta al arreglo. }\label{rectas}
\end{figure}
%

Ahora generalizaremos el resultado anterior a tres dimensiones: 

Si trazamos $n$ planos en $\R^3$, ¿cu\'al es el mayor n\'umero de regiones que podemos formar?

Igual que en el caso anterior, es claro que para obtener el m\'aximo n\'umero de regiones es necesario que los planos est\'en en \emph{posici\'on general}, en el siguiente sentido:

\qquad $\bullet$ No hay dos planos paralelos.

\qquad$\bullet$  No hay tres que se corten en una linea recta, ni tampoco tres que no se corten.

\qquad$\bullet$  No hay cuatro planos que pasen por un mismo punto.


Nuevamente, las tres condiciones anteriores determinan de manera \'unica el n\'umero de regiones en las que se divide el espacio, y tambi\'en el n\'umero de regiones acotadas, para cada valor de $n$. En este caso, llamaremos $\reg_3(n)$ al n\'umero de regiones que son determinadas por $n$ planos en posici\'on general, y $\ac_3(n)$ el n\'umero de regiones acotadas.

\begin{figure}[htb]\centering
\includegraphics[width=6cm]{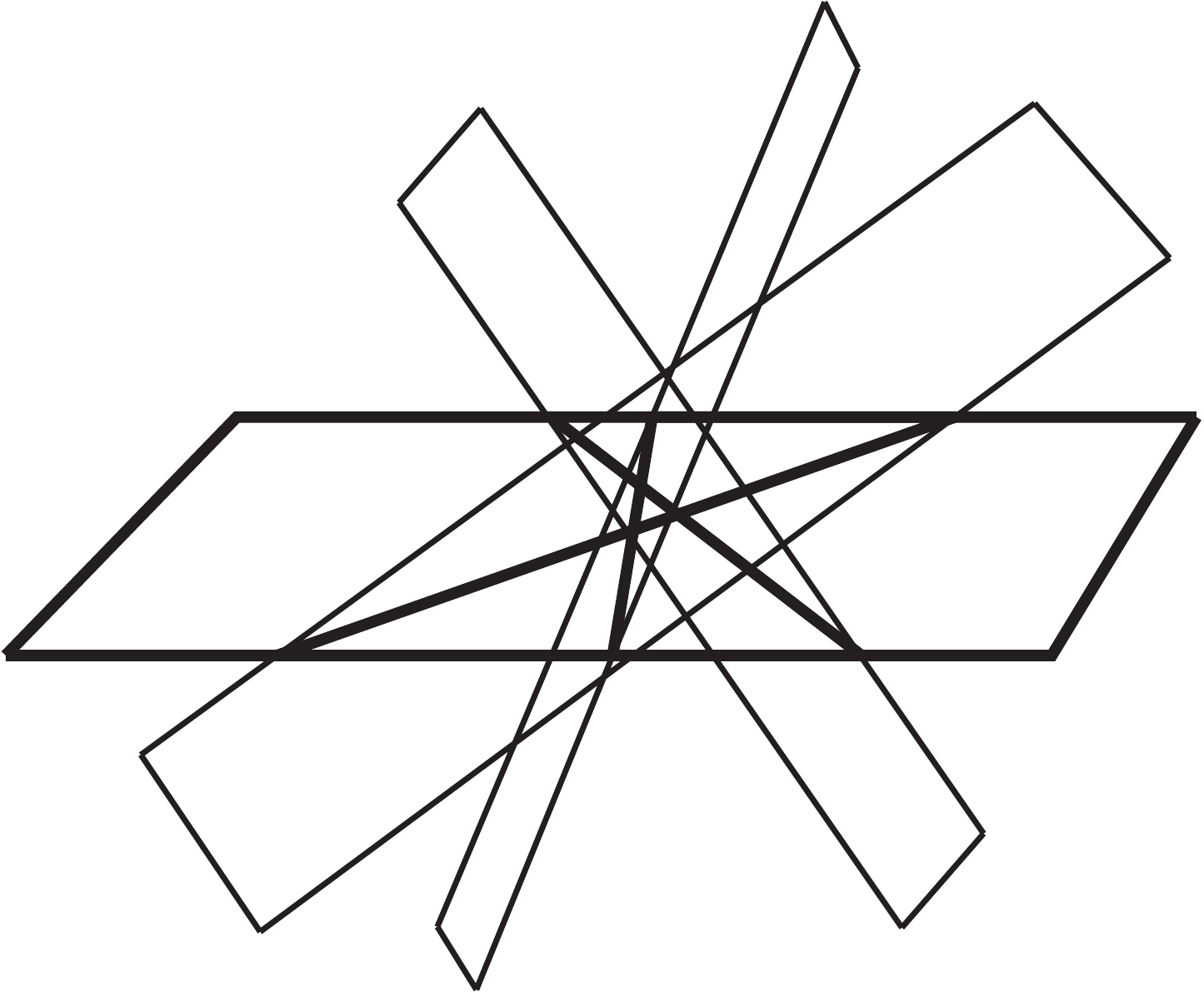}
\caption{Planos en $\R^3$ y su intersecci\'on en $P$.}\label{planos}
\end{figure}

\begin{teor}\label{3D}
Cualquier arreglo de  $n$ planos en posici\'on general en $\R^3$ tiene el m'aximo n'umero de regiones $\reg_3(n)$ y de regiones acotadas $\ac_3(n)$ entre todos los arreglos de $n$ planos. Estos n\'umeros son
\begin{eqnarray*}
\reg_3(n) &=& {n \choose 3} + {n \choose 2} +  {n \choose 1} + {n \choose 0}, \\
\ac_3(n) &=& {n \choose 3} - {n \choose 2} + {n \choose 1} - {n \choose 0}.
\end{eqnarray*}
\end{teor}

\begin{proof}
Nuevamente consideramos un plano especial $P$ de un arreglo gen\'erico de $n$ planos, para obtener relaciones recursivas para $\reg_3(n)$ y $\ac_3(n)$.
Quitando este plano del arreglo, obtenemos un arreglo con $n-1$ planos en posici\'on general, que genera $\reg_3(n-1)$ regiones, entre las cuales hay $\ac_3(n-1)$ regiones acotadas.


Los otros $n-1$ planos determinan $n-1$ rectas en $P$. Si los planos est\'an en posici\'on general, las rectas en $P$ se encuentran en posici\'on general. Por lo tanto ellas forman ${n\choose 2}+{n\choose 1}+{n\choose 0}$ regiones, de las cuales ${n\choose 2}-{n\choose 1}+{n\choose 0}$ son acotadas. Al volver a colocar el plano $P$, el n\'umero de estas regiones que son divididas en dos es igual al n\'umero de regiones que se forman en $P$; es decir, $r_2(n-1):= r(n-1) = {n-1 \choose 2}+{n-1 \choose 1}+{n-1 \choose 0}$. Cada regi\'on acotada en $P$ genera una nueva regi\'on acotada.
 As\'{\i} obtenemos que
\begin{equation}
\begin{split}\reg_3(n)&=\reg_3(n-1)+\reg_2(n-1)=\reg_3(n-1)+{n-1\choose 2}+{n-1\choose 1}+{n-1\choose 0},\\
\ac_3(n)&=\ac_3(n-1)+\ac_2(n-1)=\ac_3(n-1)+{n-1\choose 2}-{n-1\choose 1}+{n-1\choose 0}. \end{split}
\end{equation}
De las recurrencias anteriores se obtienen las f\'ormulas deseadas.
\end{proof}

Estos dos ejemplos sugieren una generalizaci\'on natural en cualquier dimensi\'on. Es natural conjeturar que cualquier arreglo de $n$ hiperplanos gen\'ericos en $\R^d$ tiene un cierto n\'umero de regiones $\reg_d(n)$ y regiones acotadas $\ac_d(n)$, y es f\'acil adivinar cu\'ales son esos n\'umeros. 
Esta generalizaci\'on es correcta, y la demostraremos en el Teorema \ref{nD}. Pero antes de hacerlo, debemos precisar el significado de los diferentes conceptos involucrados.
Debemos saber qu\'e son los hiperplanos, las regiones, y qu\'e significa estar en posici\'on general en el caso de dimensi\'on $d$. En la siguiente secci\'on introducimos estas definiciones.

\section{\textsf{Arreglos de hiperplanos}}

Si $v=(v_1,\ldots,\,v_d)$ y $x=(x_1,\ldots,x_n)$ son vectores en $\R^d$, denotamos el producto punto de $v$ y $x$ por $v\cdot x=v_1x_1+v_2x_2+\cdots +v_dx_d$. 

\begin{defi}\label{hiper}
Un \emph{hiperplano} en $\R^d$ es un conjunto de la forma $$H=\{x\in \R^d:v\cdot x=a\},$$ donde  $v=(v_1,\ldots,\,v_d)\in \R^d$,  $a\in\R$, y no todos los $v_i$ son iguales a cero. Un \emph{arreglo de hiperplanos} $\A$ es una colecci\'on finita de hiperplanos en $\R^d$.\end{defi}

\begin{defi}
Un \emph{subespacio af\'{\i}n} es un conjunto $S\subseteq\R^d$, tal que si $x,y \in S$  y $\lambda\in\R$, entonces $\lambda x+ (1-\lambda)y\in S$.\end{defi}

Si $S$ es un subespacio af\'{\i}n y $x\in S$, entonces la translaci\'on $S-x$ es un subespacio vectorial de $\R^d$. Esto nos permite definir la \emph{dimensi\'on} de $S$ como la dimensi\'on de $S-x$ como espacio vectorial.
Se encuentra por ejemplo que la dimensi\'on de un hiperplano $H$ es $d-1$ (pues su translaci\'on resulta ser el espacio ortogonal al vector $v$, tomando a $H$ como en la Definici\'on \ref{hiper}). Toda intersecci\'on de hiperplanos forma un subespacio af\'{\i}n, y todo subespacio af\'{\i}n de dimensi\'on $k$ se puede expresar como la intersecci\'on de $d-k$ hiperplanos en $\R^d$. 

Cada  hiperplano $H=\{x\in \R^d:v\cdot x=a\}$ divide a $\R^d$ en dos regiones, donde $v\cdot x < a$ y $v\cdot x > a$ respectivamente.

Dado un arreglo de hiperplanos $\A$, este divide al espacio en varias componentes conexas, llamadas \emph{regiones}. 
Si $\A$ est\'a formado por $n$ hiperplanos 
\[
H_i=\{x \in \R^d:v_i\cdot x=a_i\},
\]

donde  $v_i\in \R^d$ para  $1 \leq i \leq n$, cada \emph{regi\'on} de $\A$ puede ser descrita con un sistema de desigualdades {\bf que tiene soluci\'on} en $\R^d$, donde se selecciona una desigualdad de la forma 
\[
v_i\cdot x< a_i\textrm{\qquad o \qquad}v_i\cdot x> a_i
\]
para cada $1 \leq i \leq n$. 




\begin{defi}
Un arreglo de hiperplanos se encuentra en \emph{posici\'on general} si se cumplen las siguientes condiciones:

\qquad$\bullet$
 Dos hiperplanos distintos siempre se intersectan.

\qquad$\bullet$
Tres hiperplanos siempre se intersectan en un subespacio af\'{\i}n de dimensi\'on $d-3$.

\qquad$\bullet$
 Cuatro hiperplanos siempre se intersectan en un subespacio af\'{\i}n de dimensi\'on $d-4$.

\hspace{1.3cm}\vdots

\qquad$\bullet$
 $d$ hiperplanos siempre se intersectan en un punto.

\qquad$\bullet$
 $d+1$ hiperplanos nunca se intersectan.

\end{defi}

Las condiciones anteriores pueden resumirse diciendo que para todo subconjunto de $r$ hiperplanos del arreglo, con $0\leq r\leq d$, la intersecci\'on es un subespacio af\'{\i}n de dimensi\'on $d-r$; y que para m\'as de $d$ hiperplanos, la intersecci\'on siempre es vac\'{\i}a. Podemos ver la intersecci\'on de un conjunto de $r$ hiperplanos como el conjunto soluci\'on de un sistema de $r$ ecuaciones con $d$ inc\'ognitas. Un arreglo es gen\'erico cuando 
cualesquiera $r$ ecuaciones son linealmente independientes.

Luego de precisar estos conceptos, es posible demostrar la generalizaci\'on natural de los Teoremas \ref{2D} y \ref{3D} en cualquier dimensi\'on. Sin embargo, procederemos de manera diferente; vamos a 
desarrollar herramientas generales que nos permitan entender esta situaci\'on m\'as conceptualmente.


\section{\textsf{Posets de intersecci\'on, funciones de M\"obius, y polinomios \\ caracter\'{\i}sticos}}


Recordemos que un \emph{conjunto parcialmente ordenado} o \emph{poset}\footnote{del ingl\'es ``partially ordered set"} $P$ es un conjunto $P$ junto con una relaci\'on binaria $\leq$ de ``orden parcial" \ tal que:
\begin{itemize}
\item Para todo $x$ se tiene que $x \leq x$.
\item Si $x \leq y$ y $y \leq x$ entonces $x=y$.
\item Si $x \leq y$ y $y \leq z$ entonces $x \leq z$.
\end{itemize}

\begin{defi}
El \emph{poset de intersecci\'on} $L(\A)$  de $\A$ es el conjunto de las diferentes intersecciones no vac\'{\i}as de los subconjuntos de $\A$, incluyendo a los mismos hiperplanos y al espacio completo (que es la intersecci\'on del subconjunto vac\'{\i}o). Este conjunto est\'a parcialmente ordenado por la relaci\'on $X\leq Y$ para cada par de elementos $X,Y\in L(\A)$ tales que $Y\subseteq X$.
\end{defi}


\begin{ej} \label{ejposet}
La Figura \ref{poset} muestra un arreglo en $\R^2$ y su poset de intersecci\'on.
\begin{figure}[htb]\centering
\includegraphics[scale=0.7]{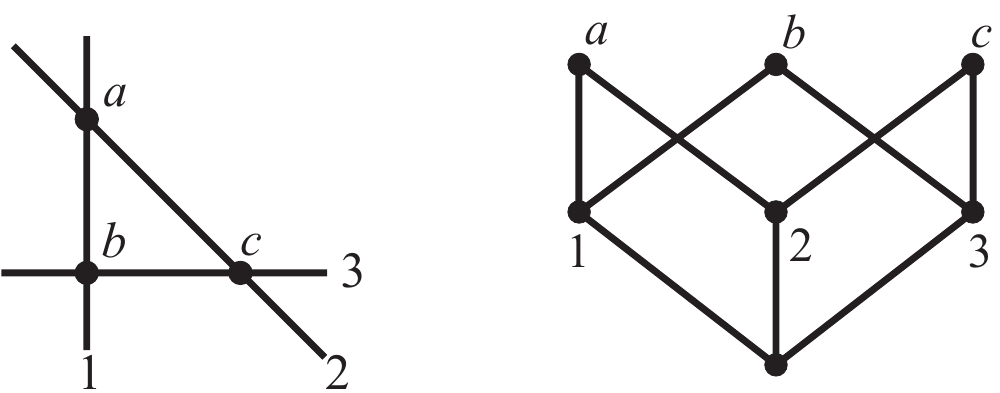}
\caption{Poset de intersecci\'on de un arreglo.}\label{poset}
\end{figure}
\end{ej}


\begin{defi}
Sea $P$ un poset finito con elemento m\'{\i}nimo $\hat0$. La \emph{funci\'on de M\"obius} $\mu:P\rightarrow \Z$ de $P$ se define recursivamente por: 
\begin{enumerate}[i)]
\item $\mu(\hat0)=1$.
\item $\mu(X)=-\sum_{Y< X}\mu(Y)$, para todo $X\neq \hat0$.
\end{enumerate}
 \end{defi}

Gian-Carlo Rota \cite{R} fue el precursor  del estudio de la funci\'on de M\"obius de un poset, una teor\'{\i}a muy elegante que conecta resultados en la teor\'{\i}a de los n\'umeros (la funci\'on de M\"obius cl\'asica), la combinatoria enumerativa (la f\'ormula de inclusi\'on-exclusi\'on), y la topolog\'{\i}a (la caracter\'{\i}stica de Euler), entre otras. Nosotros aplicaremos esta teor\'{i}a a los arreglos de hiperplanos.
%


\begin{defi}
El \emph{polinomio caracter\'{\i}stico} $\chi_{\A}(t)$ del arreglo $\A$ es
$$\chi_{\A}(t)=\sum_{X\in L(\A)} \mu(X)t^{\dim(X)}.$$
\end{defi}

Vale la pena tener en cuenta que la funci\'on de M\"obius y el polinomio caracter\'{\i}stico de un arreglo $\A$ dependen \'unicamente del poset $L(\A)$ y la dimensi\'on del espacio.

\begin{ej}\label{figu4}
La Figura \ref{muposet} muestra los valores de la funci\'on $\mu$ en el poset $L(\A)$ del ejemplo \ref{figu4}. El polinomio caracter\'{\i}stico de $\A$ es $\chi_{\A}(t)=t^2-3t+3.$
\end{ej}
\begin{figure}[h]\centering
\includegraphics[scale=0.7]{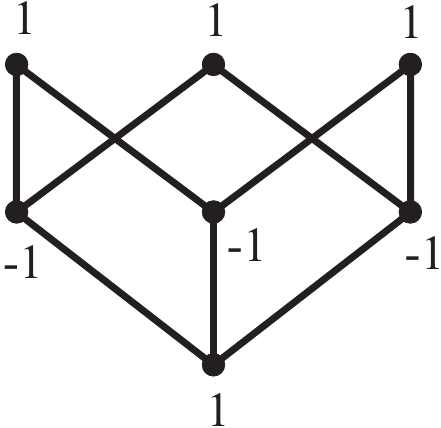}
\caption{Funci\'on de M\"obius de $L(\A)$.}\label{muposet}
\end{figure}

\begin{teor}\label{generico}
El polinomio caracter\'{\i}stico de un arreglo de $n$ hiperplanos en posici\'on general en $\R^d$ es \begin{equation}\label{pcpg}
\chi(t)=t^d-{n\choose 1}t^{d-1}+{n\choose 2}t^{d-2}-\cdots+(-1)^d{n\choose d}.
\end{equation}
\end{teor}

\begin{proof} 
En este caso los elementos de $L(\A)$ de dimensi\'on $d-k$ son todas las posibles intersecciones de $k$ hiperplanos, que son todas distintas; hay ${n\choose k}$ de ellas. Adem\'as, dado un elemento $X$ de $L(\A)$ de dimensi\'on $d-k$ que es la intersecci\'on de $\B \subseteq \A$ con $|\B|=k$, los elementos $Y$ tales que $Y \leq X$ son las intersecciones de los subconjuntos de $\B$. Por lo tanto hay ${k\choose l}$ elementos de $L(\A)$ de dimensi\'on $d-l$ menores que $X$. Usando esto, demostraremos inductivamente que $\mu(X)=(-1)^k$.

Para ver esto, primero observamos que $\mu(\R^d)=(-1)^0=1$. Para otro elemento $X\in L(\A)$ de dimensi\'on $d-k$, se encuentra que 
$$\mu(X)=-\sum_{Y<X} \mu(Y)=-\left(1-{k\choose 1}+{k\choose 2}-\cdots+(-1)^{k-1}{k\choose k-1}\right)=(-1)^k,$$
ya que la suma contiene $k \choose l$ elementos de dimensi\'on $d-l$, cuya funci\'on de M\"obius es $(-1)^l$ por inducci\'on. 
El resultado se sigue. 
\end{proof}

Existe una clara relaci\'on entre el polinomio caracter\'{\i}stico de un arreglo gen\'erico (Teorema \ref{generico}) y los n\'umeros de regiones y de regiones acotadas de tal arreglo para $d=2,3$ (Teoremas \ref{2D} y  \ref{3D}). Nuestro siguiente objetivo es aclarar esta relaci\'on, en el Teorema \ref{zas}.

\begin{defi} La \emph{dimensi\'on} de un arreglo $\A$ en $\R^n$ es $\dim(\A)=n$. El \emph{rango} de $\A$ es la dimensi\'on del espacio $V_{\A}$ generado por los vectores $v_i$ normales a cada uno de los hiperplanos, y se denota $\rank(\A)$. Decimos que $\A$ es \emph{esencial} si $\dim(\A) = \rank({\A})$.\end{defi}

Si un arreglo $\A$ no es esencial, la intersecci\'on de los hiperplanos de $\A$ es un espacio af\'{\i}n $L$. Podemos \emph{esencializar} el arreglo, considerando el arreglo $\A^{es} = \{H\cap L^{\perp} \, : \, H \in \A\}$ de hiperplanos en el espacio $L^{\perp}$ ortogonal a $L$. Es claro que $\A$ y $\A^{es}$ tienen el mismo poset de intersecci\'on, y tambi\'en que las regiones de estos dos arreglos est\'an en biyecci\'on.

\begin{defi}
Una regi\'on de $\A$ es \emph{relativamente acotada} si la regi\'on correspondiente de $\A^{es}$ es un conjunto acotado (en el sentido usual). 
 Sean $\reg(\A)$ y $\ac(\A)$ el n\'umero de regiones de $\A$ y el n\'umero de regiones relativamente acotadas de $\A$, respectivamente.
  \end{defi}

La definici\'on anterior es necesaria ya que un arreglo no esencial no tiene regiones acotadas.
%
%
%
Para simplificar, de ahora en adelante, cuando hablemos de regiones acotadas estaremos refiri\'endonos a las regi\'ones relativamente acotadas. 
El siguiente teorema relaciona el n\'umero de regiones y de regiones acotadas de un arreglo con su polinomio caracter\'{\i}stico.

\begin{teor}[Zaslavsky, \cite{zas}]\label{zas}
Los n\'umeros de regiones y de regiones acotadas de un arreglo  $\A$ en $\R^d$ est\'an dadas por:
\[
\reg(\A) =  (-1)^{d}\chi_{\A}(-1), \qquad \qquad \ac(\A) = (-1)^{\rank(\A)}\chi_{\A}(1).
\]
\end{teor}

Este teorema se demostrar\'a usando una idea que ya hemos expuesto en los ejemplos iniciales, que se conoce como el \emph{m\'etodo de eliminaci\'on/contracci\'on}. Esta es una de las t\'ecnicas m\'as \'utiles para llevar a cabo argumentos inductivos en arreglos de hiperplanos. Consiste en expresar cierta informaci\'on de un arreglo en t\'erminos de los siguientes dos arreglos m\'as peque\~nos:

\begin{defi}
Sea $\A$ un arreglo de hiperplanos en $\R^d$ y $H$ un hiperplano de $\A$.
\begin{itemize}
\item La \emph{eliminaci\'on de $H$} es el arreglo $\A -H$ en $\R^d$ formado por todos los hiperplanos de $\A$ excepto $H$.
\item La \emph{contracci\'on de $H$} es el arreglo $\A /H=\{H'\cap H:H'\in \A-H\}$ de hiperplanos en $H \cong \R^{d-1}$.
 \end{itemize}
\end{defi}

\begin{proof}[Demostraci\'on del Teorema \ref{zas}]
Usando el mismo argumento de los Teoremas \ref{2D} y \ref{3D} obtenemos las f\'ormulas recursivas
\begin{equation}\label{regdr}
\reg(\A)= \reg(\A-H)+\reg(\A/H), \qquad \qquad \ac(\A)=\ac(\A-H)+\ac(\A/H).
\end{equation} 
Basta ver que cada regi\'on de $\A/H$ est\'a partiendo una regi\'on de $\A-H$ en dos, dando como resultado todas las regi\'ones de $\A$. Algo similar sucede con las regiones acotadas.

Por otro lado, vamos a demostrar la siguiente f\'ormula recursiva para el polinomio carac\-ter\'{\i}stico de un arreglo
\begin{equation}\label{dr}
\chi_{\A}(q)=\chi_{\A-H}(q)-\chi_{\A/H}(q).
\end{equation}

Para hacerlo, vamos a demostrar que 
\begin{equation}\label{recx}
\mu_{\A}(X)=\mu_{\A-H}(X)-\mu_{\A/H}(X),
\end{equation}
para todo $X\in L(\A)$, donde el sub\'{\i}ndice indica el arreglo en el cual se calcula la funci\'on. Si $X \notin L(\A')$ definimos $\mu_{\A'}(X)=0$.

Esto lo podemos ver por inducci\'on en $d-l$ donde $l$ es la dimensi\'on de $X$. Es decir, por inducci\'on comenzando por los menores elementos de $L(\A)$. Para $X=\R^d$, se cumple que $X\notin \A/H$, por lo que $\mu_{\A}(\R^d)=\mu_{\A-H}(\R^d)-\mu_{\A/H}(\R^d)=1$. Para los hiperplanos del arreglo tambi\'en se verifica la identidad anterior, notando que si $X\neq H$, entonces $\mu_{\A/H}(X)=0$, mientras que si $X=H$, entonces $\mu_{\A-H}(X)=0$ y  $\mu_{\A/H}(H)=1$.
Para los dem\'as elementos $X\in L(\A),$ se tiene por hip\'otesis de inducci\'on que
\begin{eqnarray*}
\mu_{\A}(X)&=&-\sum_{Y<_\A X}\mu_{\A}(Y)=-\sum_{Y<_\A X}\mu_{\A-H}(Y)+\sum_{Y<_\A X}\mu_{\A/H}(Y)\\
&=&-\sum_{Y<_{\A-H} X}\mu_{\A-H}(Y)+\sum_{Y< _{\A/H} X}\mu_{\A/H}(Y) =\mu_{\A-H}(X)-\mu_{\A/H}(X)
\end{eqnarray*}
donde $Y <_\B X$ denota que $Y<X$ en el poset de intersecci\'on del arreglo $\B$. El resultado se sigue.
\end{proof}

Como corolario del teorema anterior, se encuentra que el n\'umero de regiones y el n\'umero de regiones acotadas de un arreglo tan s\'olo dependen de su poset de intersecci\'on. Veremos varias aplicaciones de este teorema. La primera es la generalizaci\'on de  los Teoremas \ref{2D} y \ref{3D} a $d$ dimensiones.

\begin{teor}\label{nD}
Cualquier arreglo de $n$ hiperplanos en posici\'on general en $\R^d$ tiene el m'aximo n'umero de regiones $\reg_d(n)$ y de regiones acotadas $\ac_d(n)$ entre todos los arreglos de $n$ hiperplanos en $\R^d$. Estos n\'umeros son
\begin{eqnarray*}
\reg_d(n) &=& {n \choose d} + {n \choose d-1} + \cdots +  {n \choose 1} + {n \choose 0}, \\
\ac_d(n) &=& {n \choose d} - {n \choose d-1} + \cdots +(-1)^{d-1} {n \choose 1} +(-1)^d {n \choose 0}.
\end{eqnarray*}
\end{teor}

\begin{proof}
Esta es una consecuencia inmediata de los Teoremas \ref{generico} y  \ref{zas}.
\end{proof}

\section{\textsf{Arreglos en campos finitos}}

Vamos a considerar ahora arreglos de hiperplanos sobre otros campos distintos a $\R$. En particular, resulta \'util considerar arreglos en un espacio vectorial finito $\F_q^d$, a\'un si nuestro inter\'es principal son los arreglos reales. Ac\'a $\F_q$ es el campo finito de $q$ elementos, donde $q=p^\alpha$,  $p$ es un n\'umero primo y  $\alpha \in \Z_{>0}$.
%
%
Algunas nociones como el n\'umero de regiones no existen (pues $\F_q$ no es un campo ordenado), pero otras nociones  como el poset de intersecci\'on permanecen iguales, y permiten darle nuevas interpretaciones al polinomio caracter\'{\i}stico.

Supongamos que $\A$ es un arreglo en $\R^d$ cuyas ecuaciones tienen coeficientes enteros. Utilizando las mismas ecuaciones sobre el campo finito $\F_q$, obtenemos un arreglo $\A_q$ en $\F_q^d$. El siguiente resultado reduce el c\'alculo del polinomio caracter\'{\i}stico de $\A$ a un problema enumerativo en $\F_q^d$.

\begin{teor}[Crapo-Rota, Athanasiadis]\label{fp}
El n\'umero de puntos en $\F_q^d$ que no pertenecen al arreglo $\A_q$ es igual a $\chi_{\A}(q)$ para todo $q=p^\alpha$ donde $p$ es un primo suficientemente grande.
\end{teor}

\begin{proof}
Cada intersecci\'on de hiperplanos $Y\in L(\A_q)$ es un espacio af\'{\i}n de dimensi\'on $k$, y por lo tanto tiene $q^k$ elementos. As\'{\i} cada sumando $\mu(Y)q^k$ en la definici\'on de $\chi_{\A}(q)$ cuenta el n\'umero de puntos de $Y$ multiplicado por $\mu(Y)$. 
Luego, cada punto $v$ de $\F_q^d$ contribuye a $\chi_{\A}(q)$ un total de $\sum_{Y \, : \, v \in Y}\mu(Y) = \sum_{Y\leq X}\mu(Y)$, donde $X$ es el m\'{\i}nimo elemento de $L(\A)$ que contiene a $v$. Esta suma es igual a $0$ si $X \neq \F_q^d\in L(\A_q)$ y es igual a $1$ si $X = \F_q^d$ es el m'inimo elemento del poset. Por lo tanto los \'unicos puntos que contribuyen son los que no est\'an en ning\'un hiperplano de $\A$, y $\chi_{\A_q}(q) = | \F_q^d - \A_q|$.


Por \'ultimo, observamos que cada elemento de $L_{\A}$ es el espacio de soluciones a un sistema de ecuaciones lineales, y si $p$ es suficientemente grande se tiene que hay un elemento correspondiente de $L_{\A_q}$ que satisface las mismas ecuaciones (en $\F_q$). En ese caso, tendremos que $L_{\A} \cong L_{\A_q}$ y tambi\'en $\chi_{\A} = \chi_{\A_q}$. El resultado se sigue.
\end{proof}

El teorema anterior puede ser de gran ayuda para calcular el polinomio caracter\'{\i}stico de un arreglo, pues en muchos casos es f\'acil contar los puntos en cuesti\'on en t\'erminos de $q$. Con frecuencia nos enfocaremos en el caso en que $q$ es un primo suficientemente grande.


\begin{ej}
Volviendo al arreglo del Ejemplo \ref{ejposet}, consideremos el arreglo en $\F_q^2$ dado por las ecuaciones $$x=0,\quad y=0,\quad  x+y=1.$$ Es f\'acil ver que el n\'umero de puntos que no pertenece a ning\'un hiperplano est\'a dado por $$(q-1)^2-(q-2)=q^2-3q+3=\chi_{\A}(q).$$
\end{ej}

En este caso, el m\'etodo de los campos finitos nos permite verificar f\'acilmente que el polinomio caracter\'{\i}stico coincide con el del Ejemplo \ref{figu4}. Para apreciar verdaderamente su utilidad, es necesario estudiar ejemplos m\'as interesantes, en los cuales el m\'etodo de campos finitos puede simplificar incre\'{\i}blemente los c\'alculos. En la secci\'on siguiente vamos a ver varios ejemplos importantes.

\section{\textsf{Varios arreglos interesantes}}

\subsection{\textsf{El arreglo de trenzas}}

\begin{defi}
El \emph{arreglo de trenzas} (tambi\'en conocido como \emph{arreglo trenza}), denotado por $\mathcal{B}_n$, es un arreglo en $\R^n$, formado por todos los ${n \choose 2}$ planos $H_{ij}$ de la forma $x_i-x_j=0$, para $1\leq i<j\leq n$.
\end{defi}

Este arreglo puede verse como la variedad algebraica $Z(\Delta _n)$, que consiste de los ceros del polinomio de Vandermonde $\prod_{1 \leq i < j \leq n} (x_i-x_j)$. Veamos c\'omo el m\'etodo de campos finitos nos permite calcular f\'acilmente el polinomio caracter\'{\i}stico de este arreglo.

\begin{teor}\label{pcbraid}
El polinomio caracter\'{\i}stico del arreglo trenza $\B_n$ es
$$\chi_{\B_n}(t) = t(t-1)(t-2)(t-3)\cdots (t-n+1).$$
\end{teor}
\begin{proof}
Al considerar $\B_n$ sobre el campo finito $\F_q$  para un primo $q$ suficientemente grande, vemos que el n\'umero de puntos de $\F_q^n$ que no pertenece a ning\'un hiperplano est\'a dado por $$q(q-1)(q-2)(q-3)\cdots (q-n+1)=\chi_{\B_n}(q),$$ pues tan solo debemos escoger como coordenadas a $n$ elementos distintos de $\F_q$. Para la primera coordenada hay $q$ posibilidades, para la siguiente $q-1$ y as\'{\i} sucesivamente. Como esta igualdad es v\'alida para infinitos valores de $q$, debe ser una igualdad de polinomios.
\end{proof}

Para el arreglo de trenzas tambi\'en es posible, pero mucho m\'as dif\'{\i}cil, calcular el polinomio caracter\'{\i}stico usando la funci\'on de M\"obius. Por ejemplo, para $n=4$,  el poset de intersecci\'on de $\B_4$ se muestra en la Figura \ref{posetp4}.

\begin{figure}[htb]\centering
\includegraphics[width=7cm]{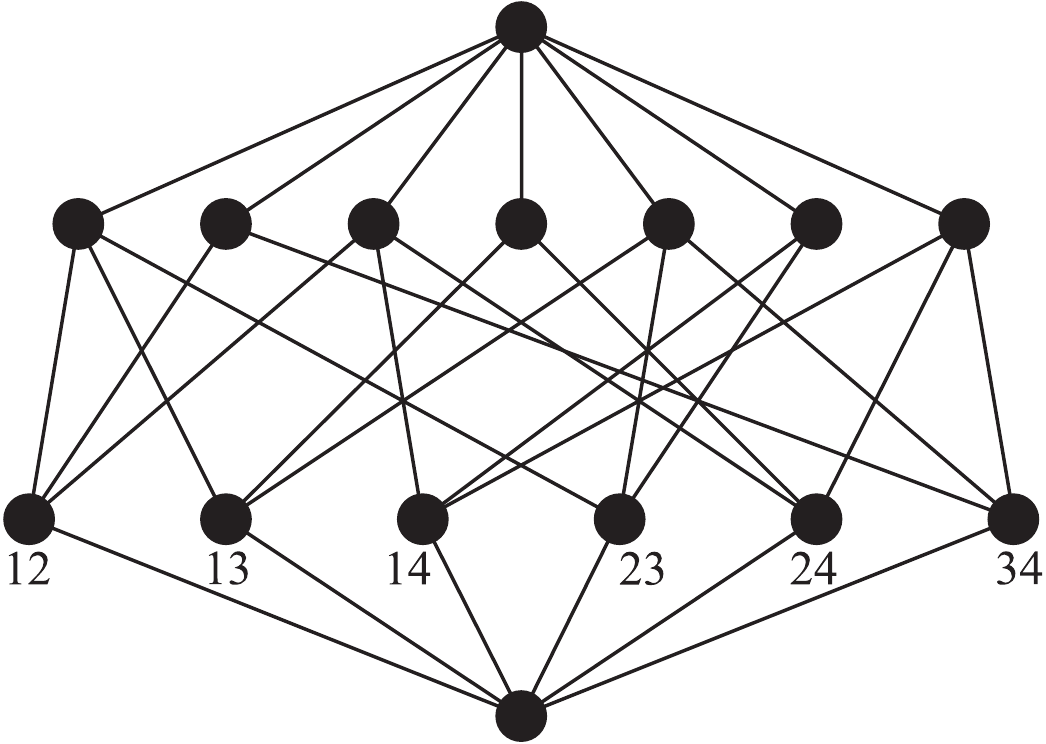}
\caption{Poset de intersecci\'on de $\B_4$.}\label{posetp4}
\end{figure}

Cada elemento marcado con un par $ij$ representa al hiperplano $x_i=x_j$.

\begin{cor}
El arreglo de trenzas $\B_n$ tiene $\reg(\B_n)=n!$ regiones, ninguna de las cuales es acotada.
\end{cor}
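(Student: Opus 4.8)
El plan es deducir ambas afirmaciones directamente del Teorema \ref{zas} de Zaslavsky, combinado con la fórmula para el polinomio característico obtenida en el Teorema \ref{pcbraid}. Como $\B_n$ es un arreglo en $\R^n$, la idea es aplicar la fórmula $\reg(\A)=(-1)^d\chi_{\A}(-1)$ con $d=n$, usando que ya conocemos $\chi_{\B_n}(t)=t(t-1)(t-2)\cdots(t-n+1)$; de modo que todo se reduce a dos evaluaciones del polinomio.

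Primero evaluaría $\chi_{\B_n}$ en $t=-1$. El producto $t(t-1)\cdots(t-n+1)$ consta de $n$ factores, que en $t=-1$ toman los valores $-1,-2,\ldots,-n$, de manera que $\chi_{\B_n}(-1)=(-1)(-2)\cdots(-n)=(-1)^n\,n!$. Por lo tanto $\reg(\B_n)=(-1)^n\chi_{\B_n}(-1)=(-1)^n(-1)^n\,n!=n!$, tal como se afirma.

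Para las regiones acotadas usaría la segunda fórmula de Zaslavsky, $\ac(\A)=(-1)^{\rank(\A)}\chi_{\A}(1)$. El punto clave es que $t=1$ es raíz de $\chi_{\B_n}(t)$, gracias al factor $(t-1)$; así $\chi_{\B_n}(1)=0$ y en consecuencia $\ac(\B_n)=0$ independientemente del valor de $\rank(\B_n)$. Esto confirma que ninguna región es acotada.

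No anticipo obstáculos de fondo, pues se trata de dos cálculos inmediatos; el único punto que conviene vigilar es el recuento de signos al evaluar en $t=-1$ (son $n$ factores negativos, y el signo $(-1)^n$ resultante cancela exactamente el $(-1)^d$ de la fórmula de regiones). Como comprobación conceptual independiente, observaría que cada región de $\B_n$ corresponde a una elección consistente de desigualdades $x_i<x_j$ o $x_i>x_j$ sobre los pares, es decir, a un orden total de $\{1,\ldots,n\}$, y hay exactamente $n!$ de ellos; además, sumar un múltiplo de $(1,\ldots,1)$ preserva todas esas desigualdades, por lo que cada región contiene una recta y ninguna puede ser acotada, en concordancia con el cálculo anterior.
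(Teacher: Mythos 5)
Tu propuesta es correcta y sigue esencialmente el mismo camino que el artículo: deducir ambas afirmaciones como consecuencia inmediata del Teorema de Zaslavsky (Teorema \ref{zas}) y del Teorema \ref{pcbraid}, con las evaluaciones $\chi_{\B_n}(-1)=(-1)^n\,n!$ y $\chi_{\B_n}(1)=0$ que el artículo da por inmediatas. Incluso tu comprobación conceptual final (regiones en biyección con los $n!$ órdenes totales de las coordenadas, y no acotación porque el arreglo es central) coincide con la prueba directa que el artículo ofrece como ilustración.
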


\begin{proof}
Esta es una consecuencia inmediata del Teorema de Zaslavsky (Teorema \ref{zas}) y del Teorema \ref{pcbraid}. Es f\'acil e ilustrativo dar una prueba directa de estas afirmaciones. La segunda igualdad es clara ya que todos los hiperplanos pasan por el origen. La primera igualdad tiene una sencilla explicaci\'on combinatoria. Cada regi\'on est\'a dada por un sistema de desigualdades, donde para cada par $i,j$ se elije si $x_i<x_j$ o $x_j<x_i$. Juntando todas las desigualdades, podemos encontrar en qu\'e orden se encuentran todos los $x_i$. Adem\'as, cada orden posible de las $n$ coordenadas determina una \'unica regi\'on. As\'{\i} concluimos que las regiones de $\B_n$ est\'an en biyecci\'on con las $n!$ permutaciones de $\{1, \ldots, n\}$. 
\end{proof}

\begin{ej}
  En la Figura \ref{braid2} se muestran las regiones del arreglo de trenzas para el caso $n=3$.

  \begin{figure}[htb]\centering
    \includegraphics[width=7cm]{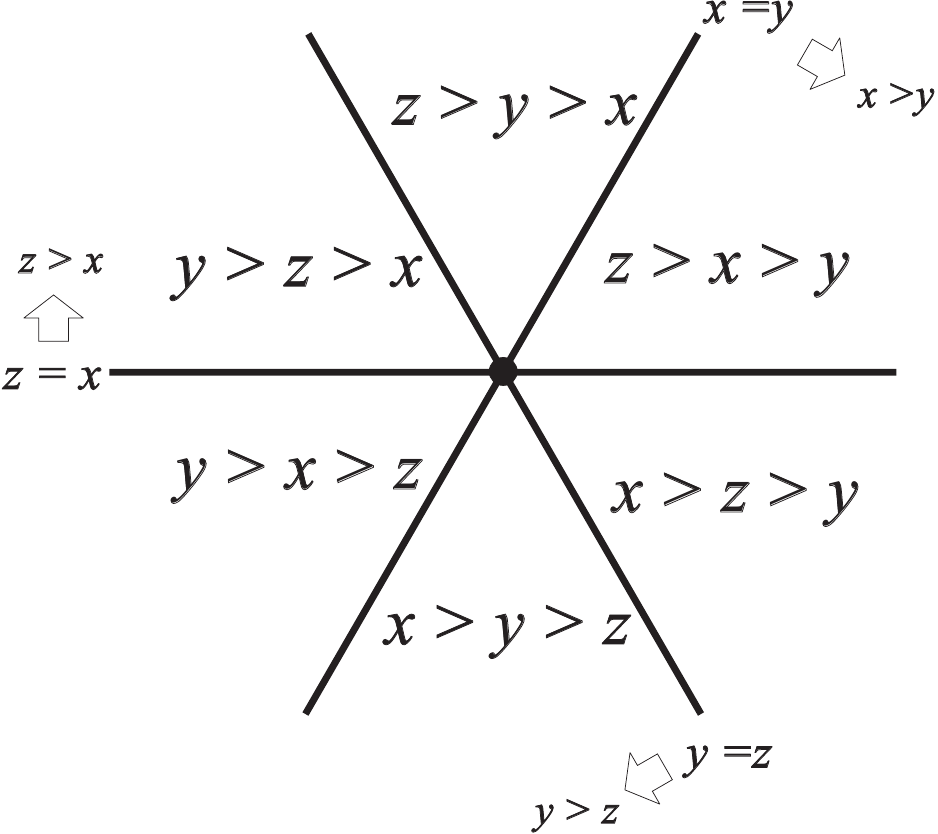}
    \caption{Regiones del arreglo $\B_3$.}\label{braid2}
  \end{figure}

  Este es un arreglo en $\R^3;$ pero como todos los hiperplanos contienen la l\'{\i}nea $\R(1,1,1)$,  dibujamos su esencializaci\'on, intersectando el arreglo con el plano $x+y+z=0$. 
\end{ej}

\subsection{\textsf{Arreglos Gr\'aficos}}

Un tema cl\'asico de la teor\'{\i}a de grafos es el de las  \emph{coloraciones propias} de un grafo $G$. Estas son las coloraciones de los v\'ertices del grafo tales que dos v\'ertices unidos por un arco no pueden tener el mismo color.  Se puede demostrar que existe un polinomio $\chi_G(x)$, conocido como el \emph{polinomio crom\'atico} de $G$, con la siguiente propiedad:
si tenemos $t$ colores disponibles, 
 el n\'umero de coloraciones propias del grafo $G$ es igual a $\chi_G(t)$. Algunas propiedades de estos polinomios se pueden encontrar en \cite{S1}.

\begin{defi}
Sea $G$ un grafo con $n$ v\'ertices numerados de $1$ a $n$. El \emph{arreglo gr\'afico} de hiperplanos $\A_G$ en $\R^n$ est\'a dado por los hiperplanos de la forma $x_i-x_j=0$ para cada par de v\'ertices $i,j$ que est\'en unidos por un arco. 
\end{defi}

\begin{teor}
Para todo grafo $G$, el polinomio crom\'atico de $G$ es igual al polinomio caracter\'{\i}stico de $\A_G$: $\chi_G(t)=\chi_{\A_G}(t)$.
\end{teor}
\begin{proof}
Considerando el arreglo $\A_G$ sobre el campo finito $\F_q$, vemos que los puntos de $\F_q^n$ que no pertenecen a ninguno de los hiperplanos est\'an en biyecci\'on con las coloraciones propias de $G$ con $q$ colores: la $i$-\'esima coordenada del punto nos da el color del v\'ertice $i$ en la coloraci\'on. Usando el Teorema \ref{fp} concluimos que $\chi_G(q)=\chi_{\A_G}(q)$ para casi todo primo $q$, y por lo tanto $\chi_G=\chi_{\A_G}$ como polinomios. \end{proof}


Tambi\'en es posible dar una interpretaci\'on al n\'umero de regiones un arreglo gr\'afico, con lo cual el polinomio caracter\'{\i}stico contendr\'{\i}a a\'un m\'as informaci\'on del grafo. Para ver esto, n\'otese que cada regi\'on de un arreglo gr\'afico $\A_G$ est\'a determinada por un sistema de desigualdades de la forma $x_i<x_j$, para cada pareja de v\'ertices $i,j$ unida por un arco.
Estas desigualdades se pueden marcar en el grafo, poniendo en cada arco una flecha de $i$ hasta $j$ si $x_i<x_j$ o viceversa.
As\'{\i}, cada sistema de desigualdades define una orientaci\'on de los arcos de $G$, como se muestra en la figura \ref{oa}.
\begin{figure}[htb]\centering
\includegraphics[scale=0.7]{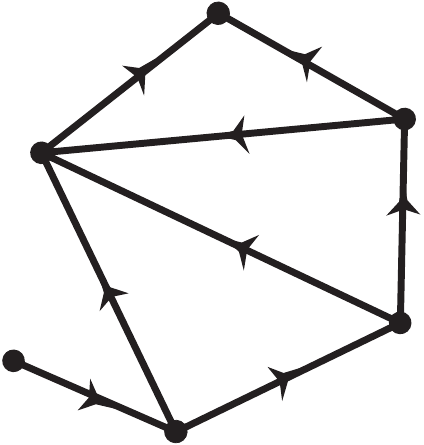}
\caption{Orientaci\'on ac\'{\i}clica de un grafo.}\label{oa}
\end{figure}

\begin{lem}
Un sistema de desigualdades para un arreglo gr\'afico tiene soluci\'on si y s\'olo si su orientaci\'on correspondiente no tiene ciclos.
\end{lem}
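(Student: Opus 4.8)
The plan is to exploit the dictionary between regions of $\A_G$, systems of inequalities, and orientations of $G$ that was just set up: a system selects, for every edge $\{i,j\}$, one of $x_i<x_j$ or $x_j<x_i$, and we record this choice by orienting the edge toward the larger coordinate. I would prove the two implications separately, handling the forward direction by its contrapositive, which is the easy half.

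First, suppose the orientation contains a directed cycle $i_1\to i_2\to\cdots\to i_k\to i_1$. By the definition of the orientation, the associated inequalities read $x_{i_1}<x_{i_2}<\cdots<x_{i_k}<x_{i_1}$, and chaining them by transitivity of $<$ forces $x_{i_1}<x_{i_1}$, which is absurd. Hence no point of $\R^n$ can satisfy the system, so any solvable system must come from an acyclic orientation.

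For the converse I would construct an explicit solution from an acyclic orientation by topological sorting. The key claim is that a finite acyclic digraph has a vertex of in-degree zero: otherwise, starting at any vertex and repeatedly stepping backward along an incoming edge, the finiteness of the vertex set forces a repeated vertex, producing a directed cycle, a contradiction. Granting this, I would induct on $n$: choose such a source $v$, delete it, topologically order the remaining (still acyclic) graph as $v_2,\ldots,v_n$, and prepend $v=v_1$. Every directed edge then runs from an earlier to a later vertex of the list $v_1,\ldots,v_n$, so setting $x_{v_m}=m$ makes each prescribed inequality $x_i<x_j$ hold, and this point solves the system.

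The only genuine obstacle lies in the converse, and within it the single nontrivial ingredient is the existence of the topological order, equivalently the source lemma above; everything else is bookkeeping. I expect the cleanest writeup to isolate the source lemma, prove it by the finiteness backward-walk argument, and then either carry out the induction or simply invoke the resulting linear extension of the acyclic orientation to define the coordinates.
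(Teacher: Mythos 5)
Your proposal is correct and follows essentially the same route as the paper: the forward implication via the contradictory chain $x_{i_1}<x_{i_2}<\cdots<x_{i_1}$ arising from a directed cycle, and the converse by induction on the number of vertices, deleting an extremal vertex of the acyclic orientation and extending a solution on the smaller graph. The only (immaterial) differences are that you delete a source where the paper deletes a sink, and that you explicitly prove the existence of that extremal vertex by the backward-walk argument, a fact the paper simply asserts.
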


\begin{proof}
Si el sistema de desigualdades tiene soluci\'on, el grafo no puede tener ciclos, pues esto implicar\'{\i}a desigualdades de la forma $x_i<x_j<\cdots<x_i$. Por otro lado, dada una orientaci\'on ac\'{\i}clica, vamos a mostrar que el sistema de desigualdades correspondiente tiene  soluci\'on, de forma inductiva sobre el n\'umero de v\'ertices $n$. Para $n=1$, el resultado es obvio. Para $n \geq 2$,  como el grafo es ac\'{\i}clico, debe haber al menos un v\'ertice $v$ del que no sale ning\'una flecha. Podemos entonces retirar a $v$ del grafo, junto con todos los arcos que llegan a $v$. El grafo $G'$ orientado que resulta tambi\'en es ac\'{\i}clico y con menos v\'ertices que el original. Usando la hip\'otesis de inducci\'on, es posible construir una soluci\'on al sistema correspondiente a $G'$; y d\'andole a la variable asociada  a $v$ un valor menor a todos los dem\'as, obtenemos una soluci\'on a nuestro sistema de desigualdades.
\end{proof}

\begin{teor}[Stanley]
El n\'umero de orientaciones ac\'{\i}clicas de un grafo $G$ es $(-1)^n\chi_G(-1)$.
\end{teor}

\begin{proof}
Basta notar que 
$$\reg(\A_G)=(-1)^n\chi_{\A_G}(-1)=(-1)^n\chi_G(-1).$$
y que las orientaciones ac\'{\i}clicas de $G$ est\'an en biyecci\'on con las regiones del arreglo $\A_G$. 
\end{proof}

Es muy interesante que el estudio de arreglos de hiperplanos nos haya permitido demostrar f\'acilmente este teorema puramente combinatorio.

Volvamos brevemente al tema de la ubicaci\'on de los ceros de polinomios combinatorios, que ya apareci\'o en el primero de estos tres art\'{\i}culos. El problema de la ubicaci\'on de los ceros de $\chi_G(t)$ ha recibido gran inter\'es, en parte gracias a su relaci\'on con el famoso Teorema de los Cuatro Colores. Este teorema dice que cualquier mapa puede ser coloreado con cuatro colores sin que haya dos pa\'{\i}ses vecinos del mismo color. Esta afirmaci\'on es equivalente a decir que si $G$ es plano (es decir, que puede pintarse en el plano sin que los arcos se corten), entonces $\chi_G(4)\neq 0.$ Otros resultados conocidos son los siguientes \cite{So, So2}:

\begin{itemize}
\item Todas las ra\'{\i}ces reales de $\chi_G$ son mayores o iguales que 0.
\item Para un grafo plano, todas las ra\'{\i}ces reales son menores que 5.  (Dijimos tambi\'en que no pod\'{\i}an ser 4, pero no se sabe si puede haber ra\'{\i}ces entre 4 y 5).
\item Cualquier ra\'{\i}z (real o compleja) $r$ satisface que $|r|<8d$, donde $d$ es el m\'aximo grado de un v\'ertice del grafo. 
\item El conjunto de todas las ra\'{\i}ces de todos los polinomios crom\'aticos es denso en el plano complejo $\Cx$.
\end{itemize}


\subsection{\textsf{Arreglo de Catalan}}

Los \emph{n\'umeros de Catalan} $C_n$ est\'an dados por la f\'ormula $$C_n=\frac1{n+1}{2n \choose n},$$ para $n\in\N$.
Los \emph{n\'umeros de Catalan} aparecen naturalmente en una gran cantidad de contextos matem\'aticos. En particular, uno de los ejercicios del libro \cite{Sv2} contiene cientos de problemas combinatorios cuya respuesta son los n\'umeros de Catalan.
Un ejemplo importante es el siguiente: el n\'umero de Catalan $C_n$ es el n\'umero de \emph{sucesiones de votaci\'on} $b_1,b_2,\ldots,b_{2n}$ donde cada $b_i$ es $1$ o $-1$, tales que
\[
b_1+\cdots+b_i \geq 0  \quad\textrm{ para } i=1, \ldots, 2n-1 
\qquad \quad \textrm{y } \qquad \quad
b_1+\cdots+b_{2n}= 0.
\]
El nombre proviene de un modelo de unas elecciones donde $2n$ votantes votan por uno de dos candidatos A y B. Los votos se reciben en orden, y el candidato A nunca est\'a detr\'as del candidato B, pero al final el resultado es un empate.

Los n\'umeros de Catalan cumplen la relaci\'on de recurrencia: 
\[
C_0=1, \qquad  \qquad C_{n+1}=C_0C_n+C_1C_{n-1}+\cdots+C_nC_0 \quad {(n \geq 0)}.
\]


\begin{defi}
El  \emph{arreglo de Catalan} $\C_n$ es el arreglo en $\R^n$ formado por los hiperplanos $$x_i-x_j=-1,\qquad x_i-x_j= 0, \qquad x_i-x_j= 1 \qquad \qquad (1\leq i<j\leq n).$$
\end{defi}

\begin{teor}\label{pccat}
El polinomio caracter\'{\i}stico del arreglo de Catalan es $$\chi_{\C_n}(t)=t(t-n-1)(t-n-2)(t-n-3)\cdots (t-2n+1).$$
\end{teor}

\begin{proof}
Usaremos nuevamente el m\'etodo de campos finitos (Teorema \ref{fp}). Sea $q>2n$ un primo, y encontremos el n\'umero de formas de seleccionar $n$ valores $(x_1,x_2,\ldots,x_n)$ en $\F_q^n$, de forma que no haya valores repetidos ni dos valores consecutivos entre las coordenadas. Para el valor de $x_1$ hay $q$ posibilidades. Para escoger los valores que pueden tomar las otras coordenadas $x_i$, hay en total ${q-n-1\choose n-1}(n-1)!$ posibilidades; ya que si escogemos $n-1$ enteros $z_1<z_2<\cdots<z_{n-1}$ entre $1$ y $q-n-1$, podemos darle a las otras $n-1$ coordenadas de $x$ los valores $x_1+z_1+1, x_1+z_2+2, \ldots,$ y $x_1+z_{n-1}+{n-1}$ en cualquiera de los $(n-1)!$ ordenes posibles. Los $n$ valores que resultan son distintos y no hay dos consecutivos. En la direcci\'on contraria, dado un punto $x$ en $\F_q^n$ que no est\'a en ning\'un hiperplano, es f\'acil recuperar los valores de $x_1, z_1, \ldots, z_{n-1}$. 

Concluimos entonces que el n\'umero de puntos de $\F_q^n$ que no est\'an en ninguno de los hiperplanos de $\C_n$ es $$q{q-n-1\choose n-1}(n-1)!=q(q-n-1)(q-n-2)(q-n-3)\cdots (q-2n+1),$$ y de aqu\'{\i} el resultado se sigue.
\end{proof}

\begin{teor} El n\'umero de regiones determinadas por el arreglo $\C_n$ es  $n! C_n$ y el n\'umero de regiones acotadas es $n!C_{n-1}$.
\end{teor}

\begin{proof}
Combinando el Teorema de Zaslavsky (Teorema \ref{zas}) con el Teorema \ref{pccat}, obtenemos una prueba directa de estos resultados. Vamos a esbozar una segunda demostraci\'on que aclara la relaci\'on entre los arreglos de Catalan y los n\'umeros de Catalan. La demostraci\'on completa se encuentra, por ejemplo, en \cite{tg}.


El arreglo de Catalan contiene al arreglo de trenzas, que divide a $\R^n$ en $n!$ sectores iguales. En cada sector, el orden de las coordenadas de los puntos est\'a dado por una permutaci\'on fija. Consideremos por ejemplo la region $x_1 > x_2 > \cdots > x_n$ de $\B_n$; vamos a demostrar que el arreglo de Catalan la divide en $C_n$ subregiones. Para especificar una de estas subregiones, debemos decidir si $x_i-x_j < 1$ o $x_i - x_j > 1$ para cada $i<j$. (Ya sabemos que $x_i-x_j > -1$ para cada $i<j$.) En otras palabras, debemos decidir el orden de los n\'umeros $x_1, \ldots, x_n, x_1+1, \ldots, x_n+1$, sabiendo que $x_1> \cdots >x_n$ y $x_1+1 > \cdots > x_n+1$.  Para cada orden posible, reemplacemos cada $x_i$ por un $-1$ y cada $x_i+1$ por un $1$. Por ejemplo, el orden $x_1+1 > x_2+1 > x_1 > x_3+1 > x_2 > x_4+1 > x_3 > x_4$ se convierte en la sucesi\'on $(1,1,-1,1,-1,1,-1,-1)$. Es claro que el resultado es una sucesi\'on de votaci\'on, y cada sucesi\'on de votaci\'on corresponde a una subregi\'on. Adem\'as, uno puede verificar que la regi\'on es acotada cuando todas las sumas parciales  $b_1+\cdots + b_j$ con $1 \leq j \leq 2n-1$ son positivas. Tambi\'en es f\'acil ver  que hay exactamente $C_{n-1}$ sucesiones con esa propiedad. Por lo tanto la regi\'on $x_1 > x_2 > \cdots > x_n$ de $\B_n$ contiene $C_n$ regiones de $\C_n$, de las cuales $C_{n-1}$ son acotadas. El resultado se sigue. 
%
\end{proof}

\subsection{\textsf{Arreglo de Shi}}


\begin{defi} 
El arreglo de Shi $\Shi_n$ en $\R^n$  est\'a formado por los hiperplanos
 $$x_i-x_j= 0, \qquad x_i-x_j= 1 \qquad \qquad (1\leq i< j\leq n).$$
\end{defi}

\begin{prop}\label{pcshi}
El polinomio caracter\'{\i}stico para $\Shi_n$ es $\chi_{\Shi_n}(t)=t(t-n)^{n-1}$.
\end{prop}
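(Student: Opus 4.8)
El plan es usar el método de campos finitos (Teorema \ref{fp}). Tomando $q=p$ un primo suficientemente grande y considerando $\Shi_n$ sobre $\F_q$, basta contar los puntos $(x_1,\dots,x_n)\in\F_q^n$ que no caen en ningún hiperplano, pues por el Teorema \ref{fp} ese número coincide con $\chi_{\Shi_n}(q)$. Un punto evita todos los hiperplanos $x_i-x_j=0$ (con $i<j$) exactamente cuando sus coordenadas son distintas, y evita los hiperplanos $x_i-x_j=1$ (con $i<j$) exactamente cuando no existe ninguna pareja $i<j$ con $x_i=x_j+1$. Me propongo entonces demostrar que el número de tales tuplas es $q(q-n)^{n-1}$: como la igualdad valdría para infinitos valores de $q$, se seguiría la identidad de polinomios $\chi_{\Shi_n}(t)=t(t-n)^{n-1}$.

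El corazón del argumento es ese conteo, y la idea clave es reinterpretarlo sobre el círculo $\Z/q\Z$. Pienso las $n$ coordenadas como una colocación de las etiquetas $1,\dots,n$ en posiciones distintas $x_1,\dots,x_n$ del círculo. La condición de coordenadas distintas dice que las posiciones son distintas, y la condición ``no hay $i<j$ con $x_i=x_j+1$'' se traduce así: si dos etiquetas ocupan posiciones consecutivas $p$ y $p+1$, entonces la etiqueta situada en $p+1$ (la de valor mayor) debe tener índice mayor que la situada en $p$. Equivalentemente, dentro de cada arco maximal de posiciones consecutivas ocupadas, los índices de las etiquetas crecen en el mismo sentido en que crece la posición. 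Como $q>n$, siempre hay al menos una posición vacía, así que estos arcos no dan toda la vuelta al círculo.

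Con esta reformulación, contar las tuplas válidas equivale a contar las colocaciones de $1,\dots,n$ sobre $\Z/q\Z$ en las que las etiquetas crecen dentro de cada arco. Para evaluar este número me propongo dar una biyección explícita con el conjunto $\Z/q\Z\times\{0,1,\dots,q-n-1\}^{n-1}$, que tiene exactamente $q(q-n)^{n-1}$ elementos: el factor $q$ recoge una elección libre (la posición de una etiqueta de referencia) y cada uno de los $n-1$ factores $q-n$ recoge la inserción de una etiqueta nueva, registrando no su posición absoluta sino su posición relativa entre las posiciones aún disponibles después de contraer los arcos ya formados.

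La principal dificultad es justamente lograr que el número de opciones sea constante en cada inserción. Si uno coloca las etiquetas una a una de forma ingenua, las posiciones prohibidas para una etiqueta nueva son las ya ocupadas junto con sus trasladadas por una unidad; pero el tamaño de esa unión no es $2(k-1)$, sino $(k-1)+(\text{número de arcos})$, pues cada arco genera exactamente un solapamiento. Por eso el conteo directo no se factoriza de entrada, y hay que controlar el número de arcos: esto es precisamente lo que resuelve la biyección por contracción (o, alternativamente, un argumento de función generatriz sobre el círculo). Una vez establecido que el conteo vale $q(q-n)^{n-1}$ para primos grandes, el resultado se sigue de inmediato.
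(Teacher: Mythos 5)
Tu planteamiento general coincide con el de la demostración del artículo: método de campos finitos (Teorema \ref{fp}), reformulación del conteo sobre el círculo $\Z/q\Z$, y la meta de exhibir una biyección con $\Z/q\Z\times\{0,1,\dots,q-n-1\}^{n-1}$. Tu traducción de la condición (índices crecientes a lo largo de cada arco de posiciones consecutivas ocupadas; el artículo usa la orientación opuesta, equivalente vía $x\mapsto -x$) es correcta, y también identificas bien la dificultad central. El problema es que el único paso que constituye el corazón de la prueba ---la biyección misma--- queda solamente afirmado, y el mecanismo que esbozas no puede funcionar tal como lo describes. Al insertar la etiqueta $k$ en una configuración parcial de las etiquetas $1,\dots,k-1$, el número de posiciones válidas es exactamente $q-(k-1)-a_{k-1}$, donde $a_{k-1}$ es el número de arcos ya formados; en particular, la etiqueta $2$ tiene exactamente $q-2$ posiciones válidas. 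Contraer los arcos ya formados no cambia este número: la contracción sólo re-indexa el mismo conjunto de posiciones válidas. Como $q-2>q-n$ para $n\geq 3$, la ``posición relativa'' de la etiqueta $2$ no cabe en $\{0,\dots,q-n-1\}$, de modo que el mapa ni siquiera está bien definido hacia el producto. Más en general, ningún esquema secuencial en el que la $k$-ésima coordenada codifique (y determine) la $k$-ésima inserción puede ser biyectivo, porque el número de opciones en cada paso depende de la configuración parcial (a través de $a_{k-1}$) y no es constante.

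La salida ---y es exactamente lo que hace el artículo--- es invertir el orden de construcción y hacer la codificación global en vez de secuencial: primero se colocan los $q-n$ símbolos vacíos $\circ$, indistinguibles, alrededor del círculo; luego cada etiqueta $2,\dots,n$ elige de manera \emph{independiente} uno de los $q-n$ espacios entre dos $\circ$ consecutivos (el orden dentro de cada espacio queda forzado por la monotonía, y la etiqueta $1$ se ubica en algún espacio, lo cual es una sola opción porque los $\circ$ son aún indistinguibles); por último se elige la rotación, es decir el valor de $x_1$, de $q$ maneras. Equivalentemente, en términos de tu configuración final: el código de la etiqueta $k$ debe definirse como el número de posiciones vacías estrictamente entre $x_1$ y $x_k$ en sentido horario; este número sí cae en $\{0,\dots,q-n-1\}$ (usando que la etiqueta $1$ es necesariamente la primera de su espacio), y la biyectividad se verifica reconstruyendo la palabra cíclica a partir de los códigos y luego anclándola con $x_1$. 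Con ese cambio tu argumento se completa; sin él, el conteo $q(q-n)^{n-1}$ queda sin demostrar.
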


\begin{proof}
Nuevamente vamos a usar el m\'etodo de campos finitos; contemos los puntos de $\F_q^n$ que no satisfacen ninguna de las ecuaciones del arreglo de Shi. Representemos a cada uno de esos puntos $x=(x_1, \ldots, x_n)$ por una $q$-tupla $x^{-1}=(y_0, \ldots, y_{q-1})$ de n\'umeros y s\'{\i}mbolos $\circ$, donde $y_i=\circ$ si no existe ning\'un $j \in [n]$ tal que $x_j=i$, y $y_i = j$ si $j$ es el elemento de $[n]$ tal que $x_j=i$. Tal elemento debe ser \'unico ya que $x$ no tiene valores repetidos. Por ejemplo, al punto $x=(3,2,8,1,5,7,12,11) \in \F_{13}^8$ le corresponde la $13$-tupla $x^{-1} = (\circ, 4, 2, 1, \circ, 5, \circ, 6, 3, \circ, \circ, 8, 7)$. Observemos que cada sucesi\'on de n\'umeros entre dos $\circ$ consecutivos es decreciente, ya que $x_i-x_j \neq 1$ para $i<j$.

Podemos construir estas $q$-tuplas de una manera alternativa que nos permitir\'a contarlas f\'acilmente. Para hacerlo, pensamos en la $q$-tupla $x^{-1}$ como un vector escrito alrededor de un c\'{\i}rculo, reflejando la estructura c\'{\i}clica del campo $\F_q$ bajo adici\'on. Primero dibujamos $q-n$ s\'{\i}mbolos indistinguibles $\circ$. Luego ubicamos un $1$ entre cualesquiera dos de ellos, teniendo en cuenta que los s\'{\i}mbolos $\circ$ son indistinguibles por el momento. Luego ubicamos cada uno de los n\'umeros $2, \ldots, n$ en alguno de los $q-n$ espacios entre dos $\circ$ consecutivos; esto lo podemos hacer de $(q-n)^{n-1}$ maneras.
Si un espacio entre dos $\circ$ consecutivos contiene varios n\'umeros, los ubicamos en orden decreciente en el sentido de las manecillas del reloj. Esto determina el orden c\'{\i}clico de los s\'{\i}mbolos de la $q$-tupla. Por \'ultimo, para determinar la $q$-tupla exactamente, necesitamos pasar del orden c\'{\i}clico a un orden lineal, eligiendo la posici\'on del $1$ en la $q$-tupla; es decir, el valor de $x_1$. Esto nos da un total de $q(q-n)^{n-1}$ posibilidades. Es f\'acil verificar que este procedimiento produce precisamente los puntos de $\F_q^n$ que no est\'an en ning\'un hiperplano del arreglo de Shi $\Shi_n$.
\end{proof}

\begin{cor}\label{rshi}
El arreglo $\Shi_n$ tiene $(n+1)^{n-1}$ regiones, de las cuales $(n-1)^{n-1}$ son acotadas.
\end{cor}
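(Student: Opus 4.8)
El plan es deducir ambas f\'ormulas del Teorema de Zaslavsky (Teorema \ref{zas}), aplic\'andolo al polinomio caracter\'{\i}stico $\chi_{\Shi_n}(t)=t(t-n)^{n-1}$ reci\'en obtenido en la Proposici\'on \ref{pcshi}. Lo primero que har\'{\i}a es fijar la dimensi\'on y el rango del arreglo, ya que las dos f\'ormulas de Zaslavsky involucran exponentes distintos. Como $\Shi_n$ es un arreglo en $\R^n$, su dimensi\'on es $\dim(\Shi_n)=n$. Los vectores normales de sus hiperplanos tienen la forma $e_i-e_j$, y estos generan el subespacio $\{x\in\R^n:x_1+\cdots+x_n=0\}$, de dimensi\'on $n-1$; por lo tanto $\rank(\Shi_n)=n-1$ y el arreglo no es esencial.

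Para el n\'umero de regiones usar\'{\i}a la primera f\'ormula con $d=n$:
$$\reg(\Shi_n)=(-1)^n\chi_{\Shi_n}(-1)=(-1)^n(-1)(-1-n)^{n-1}.$$
Escribiendo $-1-n=-(n+1)$ y agrupando las potencias de $-1$, todos los signos se cancelan y queda $(n+1)^{n-1}$. Para las regiones acotadas usar\'{\i}a la segunda f\'ormula, cuyo exponente es el \emph{rango} $n-1$:
$$\ac(\Shi_n)=(-1)^{n-1}\chi_{\Shi_n}(1)=(-1)^{n-1}(1-n)^{n-1}.$$
Al escribir $1-n=-(n-1)$ los signos vuelven a cancelarse y se obtiene $(n-1)^{n-1}$.

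El \'unico punto que requiere cuidado —m\'as que un verdadero obst\'aculo— es usar el exponente correcto en cada f\'ormula: la dimensi\'on $n$ para las regiones totales y el rango $n-1$ para las acotadas. Confundirlos cambiar\'{\i}a la paridad, y por ende el signo del resultado, convirtiendo una potencia de un entero positivo en su opuesta. Fuera de esta verificaci\'on, todo se reduce a un c\'alculo elemental con potencias de $-1$, por lo que no anticipo dificultades serias.
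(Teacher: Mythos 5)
Tu propuesta es correcta y sigue esencialmente el mismo camino que el art\'{\i}culo: aplicar el Teorema de Zaslavsky (Teorema \ref{zas}) al polinomio caracter\'{\i}stico de la Proposici\'on \ref{pcshi}, que es exactamente lo que el texto llama una ``consecuencia inmediata''. De hecho, tu versi\'on es m\'as detallada que la del art\'{\i}culo, pues verifica expl\'{\i}citamente que $\rank(\Shi_n)=n-1$ y lleva a cabo el c\'alculo de signos, pasos que el texto deja impl\'{\i}citos.
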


\begin{proof}
Esta es una consecuencia inmediata de la Proposici\'on \ref{pcshi} y el Teorema de Zaslavsky.
\end{proof}

El arreglo de Shi est\'a cercanamente relacionado a ciertas listas de n\'umeros conocidas como \emph{funciones de parqueo}. Para explicar su definici\'on, consideremos la siguiente situaci\'on:

\begin{figure}[htb]\centering
\includegraphics[width=12cm]{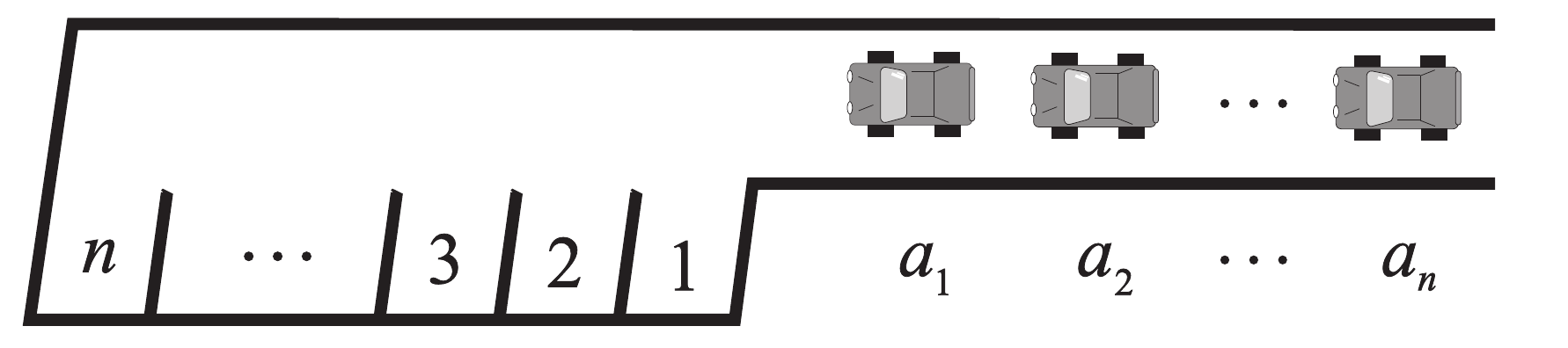}
\caption{Funciones de parqueo.}\label{pkf}
\end{figure}

En un parqueadero se tienen $n$ espacios de estacionamiento ubicados en l\'{\i}nea, nume\-rados en orden de $1$ a $n$, donde el espacio n\'umero $1$ es el m\'as cercano a la puerta de entrada del parqueadero, y el n\'umero $n$ se encuentra llegando a la salida. Una fila de $n$ autos se dispone a entrar al parqueadero. Cada uno de los conductores tiene un estacionamiento preferido  
que desea utilizar. Es posible representar todas las elecciones de los autos mediante una lista de n\'umeros $(a_1,a_2,\ldots,a_n)$, donde el auto $i$ escoge el lugar $a_i$.
Es posible que varios autos tengan el mismo espacio preferido; es decir, que $a_i = a_j$ para $i \neq j$.

Una vez llegan los autos en orden, cada uno de ellos se dirige al sitio que escogi\'o. Si el espacio est\'a vac\'{\i}o, el auto se estaciona en ese lugar. En caso contrario, el auto sigue andando y se ubica en el primer lugar vac\'{\i}o que encuentre. Si ninguno de los lugares siguientes est\'a libre, el auto no podr\'a estacionarse. 

\begin{defi} Una \emph{funci\'on de parqueo} de longitud $n$ es una lista $(a_1,a_2,\ldots,a_n)$  de elecciones,  para la cual todos los autos se pueden estacionar. El conjunto de todas las funciones de parqueo de longitud $n$ se denotar\'a por $\Pk_n$.
\end{defi}

\begin{ej}
Para $n=4$, la lista $(2,1,4,1)$ es una funci\'on de parqueo. En tal caso, los primeros tres autos se estacionan en su lugar preferido, y el \'ultimo auto se estaciona en la tercera posici\'on. Por otro lado, la lista $(3,1,4,3)$ no es una funci\'on de parqueo, pues el \'ultimo auto no podr\'a parquear.
\end{ej}

Konheim y Weiss \cite{KW} demostraron los siguientes dos teoremas:

\begin{teor}
Una sucesi\'on $(a_1, \ldots, a_n)$ de enteros con $1 \leq a_i \leq n$ es una funci\'on de parqueo si y s\'olo si contiene 
al menos un 1, al menos dos n\'umeros menores o iguales a $2$, y en general, al menos $k$ n\'umeros menores o iguales a $k$, para cada $k$ entre 1 y $n$. 
\end{teor}

\begin{teor}\label{kw}
Existen en total $(n+1)^{n-1}$ funciones de parqueo de longitud $n$.
\end{teor}

Es posible demostrar el Teorema \ref{kw} por medio de los arreglos de hiperplanos, gracias a la cercana relaci\'on entre las funciones de parqueo y el arreglo de Shi. Teniendo en cuenta el Teorema \ref{rshi}, es suficiente dar una biyecci\'on entre las funciones de parqueo de longitud $n$ y las regiones del arreglo de Shi $\Shi_n$. A continuaci\'on vamos a describir una biyecci\'on $\lambda$. La Figura \ref{lambda} ilustra esta biyecci\'on para $n=3$.

\begin{figure}[htb]\centering
\includegraphics[width=8cm]{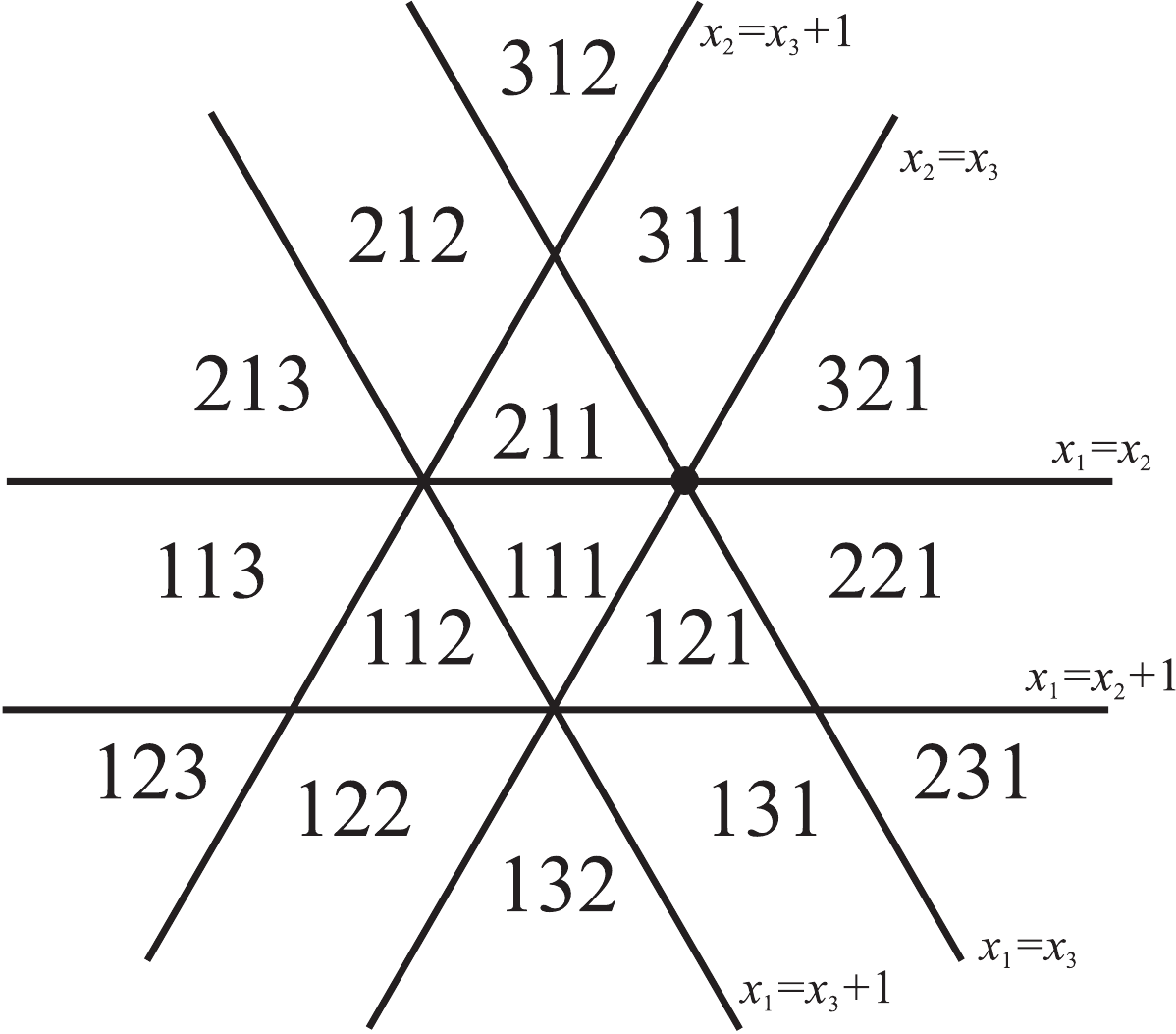}
\caption{El arreglo de Shi $\Shi_3$ y las etiquetas de sus regiones. El punto marcado es el origen.
}\label{lambda}
\end{figure}



Para definir la biyecci\'on $\lambda$, comenzaremos por llamar $R_0$ a la ``regi\'on base", en la cual $x_1<x_2<\cdots<x_n<x_1+1<x_2+1<\cdots<x_n+1$. Dadas dos regiones $R$ y $R'$, definimos la \emph{distancia} $d(R,R')$ como el n\'umero de hiperplanos $H$ del arreglo tales que $R$ y $R'$ se encuentran a lados opuestos de $H$; es decir, el n\'umero de hiperplanos que debemos cruzar para llegar de $R$ a $R'$. Construimos entonces la funci\'on $\lambda(R)$ recursivamente en funci\'on de $d(R_0,R)$ de la siguiente manera:

\begin{enumerate}
\item $\lambda(R_0)=(1,1,\ldots,1)\in \Pk_n$.
\item Si $R$ y $R'$ son regiones adyacentes, separadas \'unicamente por el hiperplano $x_i-x_j=0$, y $d(R_0,R') = d(R_0,R)+1$, definimos $\lambda(R')=\lambda(R)+e_i$, donde $e_i$ es el vector con s\'olo un 1 en la $i$-\'esima coordenada.
\item Si $R$ y $R'$ son regiones adyacentes, separadas \'unicamente por el hiperplano $x_i-x_j=1$, y $d(R_0,R') = d(R_0,R)+1$, definimos $\lambda(R')=\lambda(R)+e_j$. 
%
\end{enumerate}

Es f\'acil ver que la funci\'on $\lambda$ est\'a bien definida. Tambi\'en es cierto (pero no tan f\'acil) que es una biyecci\'on entre las regiones de $\Shi_n$ y las funciones de parqueo de longitud $n$.

La construcci\'on de esta biyecci\'on $\lambda$ es debida a Igor Pak, y puede ser extendida a las \emph{$k$-funciones de parqueo}, que est\'an relacionadas con los \emph{arreglos de $k$-Shi} $\Shi_n^k$. No entraremos en detalle al respecto, pero la demostraci\'on de la biyecci\'on de Pak y su generalizaci\'on a los arreglos de $k$-Shi puede encontrarse en \cite{S3}. 

El n\'umero $(n+1)^{n-1}$ juega un papel importante en varios contextos combinatorios. Tal vez el m\'as importante es que cuenta el n\'umero de \'arboles n\'umerados $0, \ldots, n$. Una \emph{inversi\'on} en un \'arbol es un par de v\'ertices $i,j$ con $1\leq i < j\leq n$ tal que $j$ se encuentra en el camino de $i$ hasta 0. Kreweras \cite{K} construy\'o una biyecci\'on entre las funciones de parqueo y los \'arboles numerados que demuestra el siguiente resultado:


\begin{teor}[Kreweras, \cite{K}]\label{kr}
El n\'umero de regiones del arreglo de Shi $\Shi_n$ que se encuentran a una distancia $k$ de la regi\'on base $R_0$ es igual al n\'umero de \'arboles con v\'ertices $0,\,1,\ldots,\,n$ que tienen ${n\choose 2}-k$ inversiones.\end{teor}

\subsection{\textsf{Arreglo de Linial}}

El siguiente arreglo es parecido a los anteriores, aunque el c\'alculo de  su polinomio caracter\'{\i}stico y su n\'umero de regiones, debidos a Alex Postnikov \cite{P}, es m\'as sutil. 

\begin{defi} 
El arreglo de Linial $\Lin_n$ en $\R^n$  est\'a formado por los hiperplanos
 $$ x_i-x_j= 1 \qquad \qquad (1\leq i< j\leq n).$$
\end{defi}

\begin{teor}\cite{P}
El polinomio caracter\'{\i}stico del arreglo de Linial $\Lin_n$ es 
$$\chi_{\Lin_n}(t)=\frac{t}{2^n}\sum_{k=0}^n  {n\choose k} (t-k)^{n-1}.$$
\end{teor}

Un  \emph{\'arbol alternante} es un \'arbol cuyos v\'ertices est\'an numerados $1, 2, \ldots, n$ de manera que todo v\'ertice es o bien mayor que todos sus vecinos o bien menor que todos ellos. 

\begin{teor}\cite{P}
El n\'umero de regiones del arreglo de Linial $\Lin_n$ es igual al n\'umero de \'arboles alternantes con $n+1$ v\'ertices, y est\'a dado por la f\'ormula $$\reg(\Lin_n)=\frac1{2^n}\sum_{k=0}^n  {n\choose k} (k+1)^{n-1}.$$
\end{teor}

Se conocen varias familias de objetos que est\'an en biyecci\'on con los \'arboles alternantes. Sin embargo, a\'un no se conoce una biyecci\'on natural entre las regiones de $\Lin_n$ y los \'arboles alternantes. Tampoco se conoce una interpretaci\'on de las regiones acotadas del arreglo de Linial,  en t\'erminos de \'arboles alternantes u otros objetos combinatorios. Tambi\'en ser\'{\i}a interesante encontrar una interpretaci\'on combinatoria para los coeficientes del polinomio $\chi_{\Lin_n}(t)$. Culminamos esta secci\'on  con un resultado muy sorprendente, la ``hip\'otesis de Riemann para el arreglo de Linial":

\begin{teor} \cite{P} Todas las ra\'{\i}ces del polinomio $\dfrac{\chi_{\Lin_n}(t)}t$ tienen  parte real igual a  $\dfrac n2$; es decir, son de la forma $\dfrac n2+bi$ con $b \in \R$.
\end{teor}


\section{\textsf{Arreglos complejos y el \'algebra de Orlik-Solomon}}

En esta secci\'on consideramos los arreglos de hiperplanos sobre el campo $\Cx$ de los n\'umeros complejos. Los complejos no forman un campo ordenado, por lo cual un hiperplano en $\Cx^d$ no tiene un lado `positivo' y  otro `negativo'. Por el contrario, el complemento de un hiperplano es un espacio conexo. Por lo tanto no es posible definir regiones como lo hicimos sobre los reales. Ahora la topolog\'{\i}a del complemento $\Cx^d - \A$ del arreglo es m\'as complicada, pero a\'un guarda una estrecha relaci\'on con el poset de intersecci\'on y el polinomio caracter\'{\i}stico, que se pueden definir igual que antes.

Sea $\A=\{H_1, \ldots, H_n\}$ un arreglo de hiperplanos en $\Cx^d$. Sea $E(\A)$ el \'algebra asociativa sobre $\Cx$ generada por los hiperplanos (considerados como s\'{\i}mbolos formales), sujeta \'unicamente a las relaciones $H_i^2=0$ para $1 \leq i \leq n$ y $H_iH_j=-H_jH_i$ para $1 \leq i < j \leq n$. Es claro que $\{\prod_{H \in \B} H \, : \, \B \subseteq \A\}$ es una base de $E(\A)$ como espacio vectorial, y por lo tanto $\dim(E(\A)) = 2^n$. (Algunos lectores reconocer\'an a $E(\A)$ como el \'algebra exterior de un espacio vectorial de dimensi\'on $n$.)

Definimos la funci\'on lineal $\partial:E(\A) \rightarrow E(\A)$ de la siguiente manera:
\[
\partial(H_{i_1}H_{i_2}\cdots H_{i_k}) = \sum_{j=1}^k (-1)^j H_{i_1} \cdots \widehat{H_{i_j}} \cdots H_{i_k}
\]
donde $\widehat{H_{i_j}}$ significa que omitimos el t\'ermino $H_{i_j}$. Decimos que $H_{i_1}, \ldots, H_{i_k}$ son \emph{dependientes} si sus vectores normales lo son; es decir, si $\dim(H_{i_1} \cap \cdots \cap H_{i_k}) > d-k$.

Sea $I_{\A}$ el ideal de $E(\A)$ generado por \\
$\bullet$ Los productos $H_{i_1}H_{i_2}\ldots H_{i_k}$ tales que $H_{i_1}\cap H_{i_2}\cap\ldots \cap H_{i_k}=\emptyset$, y\\
$\bullet$ Los elementos $\partial(H_{i_1}H_{i_2}\ldots H_{i_k})$ tales que $H_{i_1}, \ldots, H_{i_k}$ son dependientes.

\begin{defi}\cite{OS}\label{osalgebra}
El \emph{\'algebra de Orlik-Solomon} del arreglo $\A$ es $OS(\A)=E(\A)/I_{\A}$.
\end{defi}

Podemos considerar a $E(A) = E(\A)_0 \oplus E(\A)_1 \oplus \cdots \oplus E(\A)_n$  como un espacio vectorial graduado donde la componente $E(\A)_k$ de grado $k$ es generada por los productos $H_{i_1}\cdots H_{i_k}$ de grado $k$. Como $I_{\A}$ es un ideal homog\'eneo, el cociente $OS(\A) = OS(\A)_0 \oplus \cdots \oplus OS(\A)_n$ 
 tambi\'en es graduado. 

\begin{defi}
El \emph{polinomio de Hilbert} de $OS(A)$ se define como
$$\text{Hilb}_{OS(\A)}=\sum_{k=0}^{n}(dim(OS(\A)_k))x^k.$$
\end{defi}

\begin{figure}[htb]\centering
\includegraphics[width=4cm]{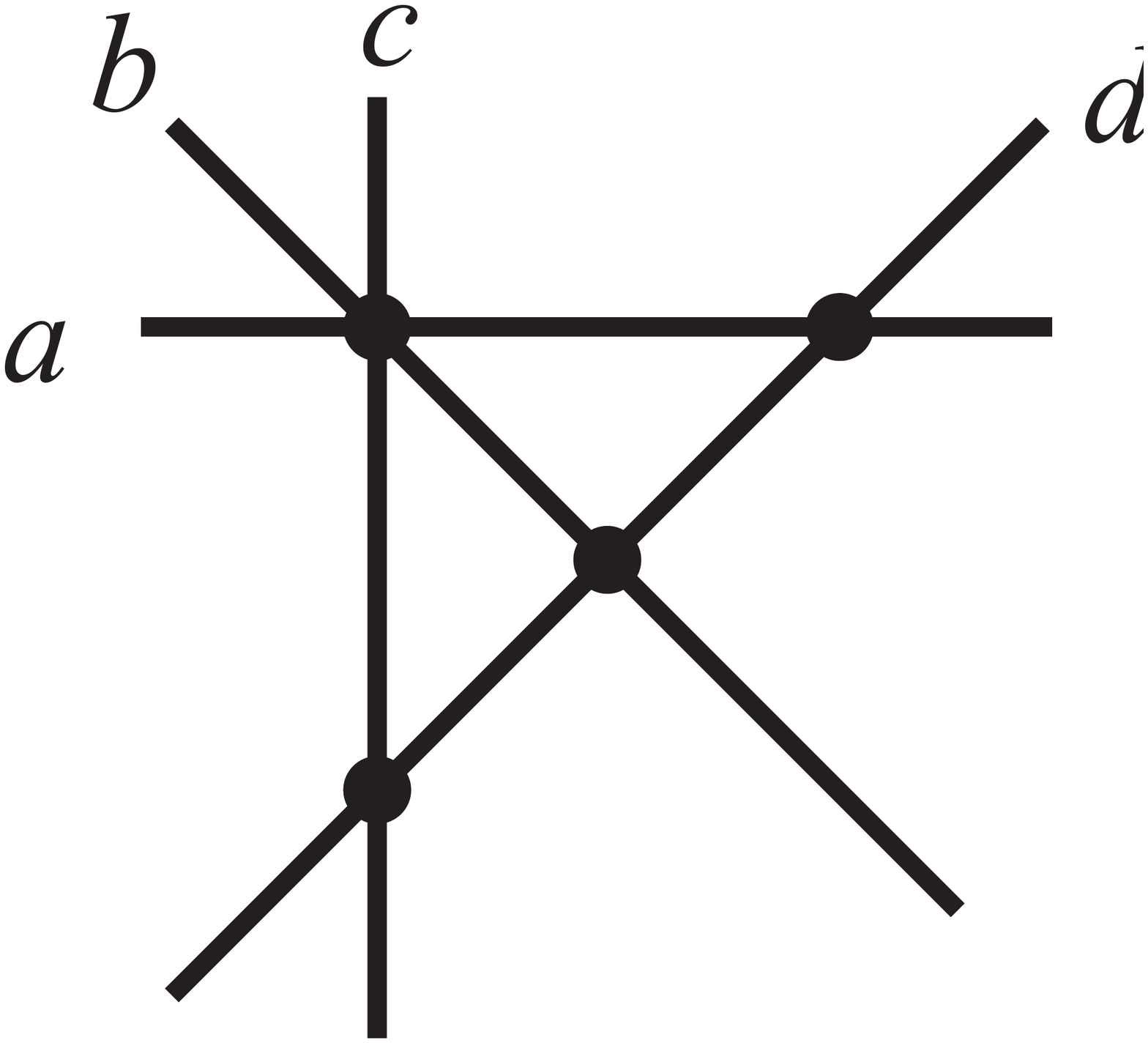}
\caption{Arreglo en $\R^2$}\label{os}
\end{figure}

\begin{ej}
Sea $\A$ el arreglo en $\Cx^2$ cuyos hiperplanos son $x=0, \, y=0,\, x+y=0,$ y $x-y=1$. En la Figura \ref{os} ilustramos la parte real de este arreglo. El lector puede calcular f\'acilmente el poset de intersecci\'on (que es igual si consideramos el arreglo en $\Cx^2$ o en $\R^2$), la funci\'on de M\"obius, y el polinomio caracter\'{\i}stico de $\A$, que es $\chi_\A(t) = t^2-4t+5$.

Ahora, $E(\A)$ es generado por variables no conmutativas $a,b,c,d$ sujetas a las relaciones
\[
a^2=b^2=c^2=d^2=0, \qquad ab=-ba, \, ac=-ca, \, \ldots,\,  cd=-dc.
\]
El \'algebra de Orlik-Solomon $OS(\A)$ se obtiene al introducir las relaciones adicionales
\[
abd=acd=bcd=0, \qquad -bc+ac-ab=0.
\]
De ah\'{\i} vemos que la siguiente es una base para $OS(\A)$ como espacio vectorial:
\[
\{1, \,\,\,\, a, b, c, d, \,\,\,\, ab, ac, ad, bd, cd\}.
\] 
No incluimos al monomio $bc$ en la base porque $bc=ac-ab$. Tampoco incluimos a $abc$ porque en $OS(\A)$ se tiene que $abc=a(ac-ab) = a^2(c-b)=0$.
Tenemos entonces que 
\[
\text{Hilb}_{OS(\A)}(x)=1+4x+5x^2.
\]
\end{ej}

Algunos lectores reconocer\'an alguna semejanza entre la Definici\'on \ref{osalgebra} y algunas construcciones en la topolog\'{\i}a algebraica. De hecho, la gran importancia del \'algebra de Orlik-Solomon se debe a los siguientes elegant\'{\i}simos resultados:

\begin{teor}\cite{OS}
La cohomolog\'{\i}a de de Rham de $\Cx^d-\A$ es isomorfa al \'algebra de Orlik-Solomon de $\A$:
\[
H^{\ast}_{DR}(\Cx^d - \A,\Cx)\cong OS(\A).
\]
\end{teor}

\begin{teor}\cite{OS}
El polinomio de Hilbert de $OS(\A)$ es
\[
\text{Hilb}_{OS(\A)}(x)= x^d \chi_{\A}(-1/x);
\]
es decir, el $i$-\'esimo n\'umero de Betti de $\Cx^d-\A$ es igual al valor absoluto del $(d-i)$-\'esimo coeficiente del polinomio caracter\'{\i}stico de $\A$.
\end{teor}

No daremos una definici\'on precisa de la cohomolog\'{\i}a de de Rham y los n\'umeros de Betti en estas notas, cuyo enfoque es combinatorio. Para ver una descripci\'on completa, referimos al lector a \cite{BT}, o a \cite{OT} para una presentaci\'on desde el punto de vista de los arreglos de hiperplanos. Nos limitaremos a mencionar que, como dijimos anteriormente, \textbf{en un espacio real $\R^d$}, el complemento de un arreglo $\A$ es una uni\'on disjunta de regiones contr\'actiles. Topol\'ogicamente es un espacio bastante trivial. Su \'unica caracter\'{\i}stica interesante es el n\'umero de regiones $\reg(\A)$, que ya sabemos calcular combinatoriamente. Por el contrario, \textbf{en un espacio complejo} $\Cx^d$, la topolog\'{\i}a del complemento $\Cx^d-\A$ es m\'as interesante. Por ejemplo, vemos que este es un espacio conexo, ya que podemos dar una vuelta alrededor de cada hiperplano. La cohomolog\'{\i}a del complemento es un anillo graduado que ``mide" de cierta manera la topolog\'{\i}a de este espacio. El $i$-\'esimo  n\'umero de Betti $\beta_i = \dim H^i_{DR}(\Cx^d - \A, \Cx)$ nos dice cu\'antos huecos $i$-dimensionales independientes tiene este espacio. Es de gran inter\'es que Orlik y Solomon hayan logrado dar una presentaci\'on puramente combinatoria de la cohomolog\'{\i}a de este espacio y su polinomio de Hilbert.

%

En resumen, el polinomio caracter\'{\i}stico $\chi_\A(x)$ de un arreglo de hiperplanos $\A$ es un poderoso invariante combinatorio, que contiene una gran cantidad de informaci\'on sobre el arreglo:
\begin{itemize}
\item Si el arreglo est\'a en $\R^n$, el complemento $\R^n-\A$ es una uni\'on de $|\chi_{\A}(-1)|$  componentes conexas, y $|\chi_{\A}(1)|$ de estas regiones son acotadas.
\item Si el arreglo est\'a en $\F_q^n$, el complemento $\F_q^n-\A$ tiene exactamente $\chi_{\A}(q)$ puntos.
\item Si el arreglo est\'a en $\Cx^n$, los n\'umeros de Betti del complemento $\Cx^n-\A$ son precisamente los coeficientes de $\chi_{\A}(q)$.
\end{itemize}

\'Este es tan s\'olo el comienzo de la teor\'{\i}a de arreglos de hiperplanos. Invitamos al lector a seguir profundizando en esta fascinante \'area.

\section*{\textsf{Agradecimientos.}} Agradecemos muy especialmente a Richard Stanley, de quien aprendimos la teor\'{\i}a combinatoria de arreglos de hiperplanos. A esto se debe que esta lecci\'on, escrita originalmente en 2003, tiene bastante en com\'un con sus notas \cite{S1}  de 2005.

\end{document}